\definecolor{grn}{rgb}{0,0.6,0}
\definecolor{mrn}{rgb}{0.3,0,0}
\definecolor{blue}{rgb}{0,0,0.7}
\definecolor{Mygray}{rgb}{0.75,0.75,0.75}
\definecolor{auburn}{rgb}{0.43, 0.21, 0.1}
\definecolor{britishracinggreen}{rgb}{0.0, 0.26, 0.15}
\definecolor{taupe}{rgb}{0.28, 0.24, 0.2}
\newtheorem{theorem}{Theorem}[section]
\newtheorem{propn}{Proposition}[section]
\newtheorem{cor}{Corollary}[section]
\newtheorem{quest}{Question}[section]
\newtheorem{rmk}{Remark}[section]
\newcommand{\eps}{\varepsilon}
\newcommand{\Z}{\mathbb{Z}}
\newcommand{\Q}{\mathbb{Q}}
\newcommand{\R}{\mathbb{R}}
\begin{document}
\baselineskip=14.5pt
\title[$2$-rank and structure of $2$-class groups]{On the structure and stability of ranks of $2$-class groups in cyclotomic $\mathbb{Z}_{2}$-extensions of certain real quadratic fields}

\author{Jaitra Chattopadhyay, H Laxmi and Anupam Saikia}
\address[Jaitra Chattopadhyay, H Laxmi and Anupam Saikia]{Department of Mathematics, Indian Institute of Technology Guwahati, Guwahati - 781039, Assam, India}
\email[Jaitra Chattopadhyay]{jaitra@iitg.ac.in; chat.jaitra@gmail.com}

\email[H Laxmi]{hlaxmi@iitg.ac.in}

\email[Anupam Saikia]{a.saikia@iitg.ac.in}

\begin{abstract}
For a real quadratic field $K= \mathbb{Q}(\sqrt{d})$ with discriminant $D_{K}$ having four distinct prime factors, we study the structure of the $2$-class group $A(K_{1})$ of the first layer $K_{1} = \mathbb{Q}(\sqrt{2},\sqrt{d})$ of the cyclotomic $\mathbb{Z}_{2}$-extension of $K$. With some suitably convenient assumptions on the rank and the order of $A(K_{1})$, we characterize $K$ for which the $2$-class group $A(K)$ is isomorphic to $\mathbb{Z}/2\mathbb{Z} \oplus \mathbb{Z}/2\mathbb{Z}$. We infer that the $2$-ranks of the class groups in each layer stabilizes by virtue of a result of Fukuda. This also provides an alternate way to establish that the Iwasawa $\mu$-invariant of $K$ vanishes. In some cases, we also provide sufficient conditions on the constituent prime factors of $D_{K}$ that imply $A(K) \simeq \mathbb{Z}/2\mathbb{Z} \oplus \mathbb{Z}/2\mathbb{Z}$, $A(K_{1}) \simeq \mathbb{Z}/2\mathbb{Z} \oplus \mathbb{Z}/4\mathbb{Z}$ and $A(K^{\prime}) \simeq \mathbb{Z}/2\mathbb{Z} \oplus \mathbb{Z}/2\mathbb{Z} \oplus \mathbb{Z}/2\mathbb{Z}$, where $K^{\prime} = \mathbb{Q}(\sqrt{2d})$. This extends some results obtained by Mizusawa.

\end{abstract}

\renewcommand{\thefootnote}{}

\footnote{2020 \emph{Mathematics Subject Classification}: Primary 11R29, Secondary 11R11, 11R23.}

\footnote{\emph{Key words and phrases}: $2$-rank of class group, Iwasawa invariants, Greenberg's conjecture.}

\footnote{\emph{We confirm that all the data are included in the article.}}

\renewcommand{\thefootnote}{\arabic{footnote}}
\setcounter{footnote}{0}
\maketitle

\section{Introduction}
The central question in classical Iwasawa theory is to study the growth of arithmetic objects, such as the ideal class groups of number fields, Selmer groups of elliptic curves defined over number fields etc., in an infinite tower of number fields over a fixed number field $K$. For a prime number $p$, an infinite Galois extension $K_{\infty}/K$ is said to be a $\Z_p$-extension if the Galois group $\Gamma := {\rm{Gal}}(K_{\infty}/K)$ is topologically isomorphic to the additive group $\mathbb{Z}_{p}$ of $p$-adic integers. It is well-known (cf. \cite[Proposition 13.1]{washington_book}) that for a given $\mathbb{Z}_{p}$-extension $K_{\infty}/K$ and for each integer $n \geq 0$, there exists a unique field $K_{n}$ with $K \subseteq K_{n} \subseteq K_{\infty}$ and $[K_{n} : K] = p^{n}$. The field $K_{n}$ is commonly termed as the $n^{\rm{th}}$ layer of the $\Z_{p}$-extension $K_{\infty}/K$. Let $A(K_n)$ be the $p$-Sylow subgroup of the ideal class group $Cl_{K_{n}}$ of $K_n$. Then these $A(K_{n})$'s form an inverse system under the norm map and the inverse limit $X = \lim\limits_{\substack{ \longleftarrow \\ n}}A(K_n)$ is called the {\it Iwasawa module} over the Iwasawa algebra $\Z_p \llbracket \Gamma \rrbracket$. 

\smallskip

In \cite{iwasawa}, Iwasawa proved a remarkable result about the order of $A(K_{n})$ and it is widely known as the {\it Iwasawa's class number formula}. It states that there exist constants $\mu(K_{\infty}), \lambda(K_{\infty})$ and $\nu(K_{\infty})$ such that for sufficiently large positive integers $n$, we have $\# A(K_n) = p^{\mu(K_{\infty}) p^n + \lambda(K_{\infty}) n + \nu(K_{\infty})}$. When $K$ is totally real, it has a unique $\Z_{p}$-extension which is given by the compositum of $K$ and the cyclotomic $\Z_{p}$-extension $\Q_{\infty}$ of $\Q$. In that case, we write $\mu$ (respectively, $\lambda$) in place of $\mu(K_{\infty})$ (respectively, $\lambda(K_{\infty})$). In \cite{greenberg}, Greenberg conjectured that the invariants $\mu$ and $\lambda$ must be equal to $0$ for totally real number fields and it was further proved by Ferrero and Washington in \cite{ferrero-washington} that the $\mu$-invariant always vanishes for the cyclotomic $\Z_{p}$-extension when the number field is abelian over the field $\Q$ of rational numbers. Various mathematicians have worked towards proving the vanishing of the $\lambda$-invariant for certain number fields where the fundamental discriminant has small number of prime factors (cf.  \cite{fukuda-komatsu}, \cite{kumakawa}, \cite{mizu_paper}, \cite{nishino}, \cite{ozaki-taya}, \cite{yamamoto}). Along a similar line, the first and the third author proved the vanishing of the Iwasawa $\lambda$-invariant for $p = 3$ for an infinite family of pairs of real quadratic fields (cf. \cite{asjc-rama}).

\smallskip

Apart from the Iwasawa invariants, the $p$-rank ${\rm{rk}}_{p}(Cl_{K})$ of $Cl_{K}$, which is essentially the maximal integer $r \geq 0$ such that $(\mathbb{Z}/p\mathbb{Z})^{r} \subseteq Cl_{K}$, sheds light on the structures of the class groups and their growths in the infinite tower. For a quadratic extension of number fields $K/k$, with the class number $h_{k}$ of $k$ being odd, Gras \cite{gras} found the $2$-rank of $Cl_{K}$ in certain cases by employing the Genus formulae. In \cite{bosma-stevenhagen}, Bosma and Stevenhagen derived an algorithm to calculate the $2$-class groups $A(K)$ (when $p=2$), using quadratic forms. In the cases of quadratic and multi-quadratic fields, the fundamental units have been extensively employed to retrieve information about the order and rank of $2$-class groups. Interested readers are encouraged to refer to \cite{azizi}, \cite{azizi2015}, \cite{azizi2019}, \cite{brown-parry}, \cite{mouhib-mova} and the references listed therein to find more information about the same.

\smallskip

An illuminating fact about a $\Z_p$-extension $K_{\infty}/K$ with intermediate fields $K_{n}$ is the existence of an integer $n_0$ such that every prime ramifying in $K_{\infty}/K_{n_0}$ is totally ramified (cf. \cite[Lemma 13.3]{washington_book}). In \cite[Theorem 1]{fukuda}, Fukuda found a criterion for the stabilization of ranks and orders of $p$-class groups if certain properties hold true from this stage $K_{n_{0}}$ onward. The particular case $p = 2$ draws special attention mainly because of the explicit forms of the intermediate layers of the cyclotomic $\mathbb{Z}_{2}$-extension. For each $n \in \mathbb{N}$, let $\zeta_{2^{n+2}}$ be a primitive ${2^{(n+2)}}^{\rm{th}}$ root of unity in the field of complex numbers. The $n^{\rm{th}}$ layer of the cyclotomic $\Z_2$-extension $\Q_{\infty}/\Q$ is the field $\mathbb{Q}_{n} := \Q( \zeta_{2^{n+2}} + \zeta^{-1}_{2^{n+2}})$. For a number field $K$, we define the cyclotomic $\Z_{2}$-extension of $K$ to be the compositum $K\Q_{\infty}$ and the $n^{\rm{th}}$ layer in the cyclotomic $\Z_2$-extension of $K$, denoted by $K_{n}$, is defined to be the compositum $K\Q_{n}$. In particular, the first layer is $K_1 = K(\sqrt{2})$. In the rest of this paper, since we deal with the cyclotomic $\Z_{2}$-extension of a real quadratic field $K$, in what follows, $A(K)$ always denotes the $2$-Sylow subgroup of $Cl_{K}$ and rank of any module stands for the $2$-rank, unless otherwise mentioned. 

\smallskip

Using Fukuda's result together with genus theory, Mizusawa \cite[Theorem 1]{mizu_paper} identified a class of real quadratic fields $K$ with $A(K) \simeq A(K_1) \simeq \Z/2\Z \oplus \Z/2\Z$. Motivated by Mizusawa's work, we ask the following question.

\begin{quest}\label{question-1}
Classify all the real quadratic fields $K = \mathbb{Q}(\sqrt{d})$ such that $A(K) \simeq \Z/2\Z \oplus \Z/2\Z$ and ${\rm {rank}}(A(K_1)) = 2$. In particular, characterize all the square-free integers $d > 0$ such that $A(K_1) \simeq \Z/2\Z \oplus \Z/4\Z$ and rank of ${\rm {rank}}A(K_n) = 2$ for all $n \geq 0$.
\end{quest}

In this paper, we try to answer Question \ref{question-1} by studying $K = \Q(\sqrt{d})$ where $d \geq 0$ is odd, square-free and has four distinct prime factors. We shall derive certain congruence conditions as well as Legendre symbol conditions on the prime factors of $d$, that provide us with an answer to  Question \ref{question-1}. More precisely, we prove the following theorems.

\begin{theorem}\label{rank stability}
Let $d \geq 1$ be a square-free integer, $K = \Q(\sqrt{d})$, $K' = \Q(\sqrt{2d})$ and $K_1 = \Q( \sqrt{2},\sqrt{d})$. Assume that the places above $2\mathcal{O}_K$ are totally ramified in $K_{1}$. Then ${\rm{rank}} \ A(K) = {\rm{rank}}  \ A(K_1) = 2$ and ${\rm{rank}} \ A(K') = 3$ if and only if $d$ is one of the following types.
\begin{enumerate}
\item $d = p_1p_2p_3$, where $p_{1}, p_{2}$ and $p_{3}$ are distinct primes with $p_{1} \equiv 1 \mbox{ or } 5 \pmod {8}$ and $p_{2} \equiv p_{3} \equiv 5 \pmod {8}$.
\item $d = p_1p_2q_1q_2$, where $p_{1}, p_{2}, q_{1}$ and $q_{2}$ are distinct primes with $p_1 \equiv p_2 \equiv 5 \pmod 8$, $q_1 \equiv \mbox{ either } 3 \mbox{ or } 7 \pmod 8$ and $q_2 \equiv 3 \pmod 8$.
\item $d = q_1q_2q_3q_4$, where $q_{1}, q_{2}, q_{3}$ and $q_{4}$ are distinct primes with $q_{1} \equiv \mbox 3 \mbox{ or } 7 \pmod {8}$ and $q_2 \equiv q_3 \equiv q_4 \equiv 3 \pmod 8$.
\end{enumerate}
\end{theorem}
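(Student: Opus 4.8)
The plan is to compute the $2$-ranks of $A(K)$, $A(K')$, and $A(K_1)$ via genus theory and known rank formulas, and then to sort through the finitely many congruence patterns for the four (or three) prime factors of $d$ that make these ranks equal to $2$, $3$, and $2$ respectively. First I would recall that for a real quadratic field $\Q(\sqrt{m})$ the $2$-rank of the class group is $t-1$ if $-1$ is a norm from $\Q(\sqrt{m})$ (equivalently, if no prime $\equiv 3 \pmod 4$ divides the discriminant in an obstructing way) and related to $t-1$ otherwise, where $t$ is the number of ramified primes in $\Q(\sqrt{m})/\Q$; more precisely I would invoke the classical formula of R\'edei--Reichardt (or Gras, cited in the excerpt) giving ${\rm rank}\, A(\Q(\sqrt{m})) = t - 1 - r$ where $r$ is the rank over $\mathbb{F}_2$ of the R\'edei matrix built from the Legendre symbols of the prime discriminants. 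For $K = \Q(\sqrt d)$ with $d$ odd having four prime factors, the discriminant $D_K$ has $t=4$ ramified primes, so ${\rm rank}\,A(K)=2$ forces the R\'edei matrix to have rank exactly $1$; similarly for $K' = \Q(\sqrt{2d})$ the prime $2$ ramifies as well, giving $t=5$ and the condition ${\rm rank}\,A(K')=3$ forces its R\'edei matrix to have rank exactly $1$ as well. These two rank-one conditions, together with the fact that the entries depend only on the residues $p_i \bmod 8$ and the quadratic residue symbols $\left(\frac{p_i}{p_j}\right)$, translate into the stated congruence conditions after a (somewhat lengthy but elementary) case analysis.

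Next I would handle $A(K_1)$ where $K_1 = \Q(\sqrt2,\sqrt d)$. Here I would use the ambiguous class number formula / genus theory for the quadratic extension $K_1/\Q(\sqrt2)$ (noting $h(\Q(\sqrt2))$ is odd), as in Gras's approach cited above, combined with the hypothesis that the prime(s) above $2$ are totally ramified in $K_1/K$. The rank of $A(K_1)$ can be bounded from the genus formula by counting ramified primes in $K_1/\Q(\sqrt 2)$ and subtracting a defect term governed by whether the relevant local norm conditions are satisfied; I would also use the natural maps $A(K)\to A(K_1)$ and $A(K')\to A(K_1)$ (via the three quadratic subfields $K$, $K'$, $\Q(\sqrt2)$ of $K_1$), together with Kuroda's class number formula for the biquadratic field $K_1$, which expresses $\#A(K_1)$ (up to a controlled power of $2$ and unit-index factors) in terms of $\#A(K)$, $\#A(K')$, and $\#A(\Q(\sqrt2))=1$. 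Kuroda's formula plus the already-determined ranks of $A(K)$ and $A(K')$ pins down the possible ranks of $A(K_1)$, and the congruence conditions we extracted are exactly those making ${\rm rank}\,A(K_1)=2$ rather than $3$ or $4$.

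For the converse direction I would simply verify that each of the three listed families of $d$ indeed produces R\'edei matrices of rank $1$ for both $K$ and $K'$ and the correct genus-theoretic defect for $K_1$; this is a direct substitution of the congruence data into the matrices and is essentially bookkeeping. I would organize the three cases by the number of prime factors $\equiv 3 \pmod 8$ versus $\equiv 5 \pmod 8$: case (1) has one prime $\equiv 1$ or $5 \pmod 8$ free and two $\equiv 5$; wait, in case (1) $d$ only has three prime factors $p_1p_2p_3$ — I should treat that as the sub-case where the fourth prime is absent but the analysis is uniform if we think of the relevant matrices over $\mathbb F_2$ of the appropriate smaller size, and indeed case (1) with $d=p_1p_2p_3$ gives $t=3$ for $K$ so ${\rm rank}\,A(K)=2$ forces R\'edei rank $0$, i.e. $-1 \in N(K^\times)$, which is exactly the condition $p_1\equiv 1,5$ and $p_2\equiv p_3\equiv 5 \pmod 8$.

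The main obstacle I anticipate is the $A(K_1)$ computation: controlling the unit index $[\mathcal{O}_{K_1}^\times : \mathcal{O}_K^\times \mathcal{O}_{K'}^\times \mathcal{O}_{\Q(\sqrt2)}^\times]$ in Kuroda's formula and the genus defect for $K_1/\Q(\sqrt2)$ simultaneously, since both depend delicately on which products of the $p_i$ are norms from the various subfields; the totally-ramified hypothesis on the primes above $2$ is what makes this tractable, as it fixes the ramification in $K_1/K$ and hence the local contribution at $2$. Disentangling the $\mathbb F_2$-linear algebra of the combined R\'edei conditions for $K$ and $K'$ (five symbols, two rank conditions) to get precisely the clean congruence list — and checking there are no further solution types beyond the three stated — is the part that requires the most careful enumeration, but it is a finite check once the framework is set up.
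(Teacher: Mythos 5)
There is a genuine gap at the very first step, and it propagates through the whole plan: the R\'edei--Reichardt matrix does not compute the $2$-rank of $A(K)$. The formula $t-1-r$, with $r$ the $\mathbb{F}_2$-rank of the R\'edei matrix, gives the \emph{4-rank} of the narrow class group (the quantity $\#\bigl(2A^{+}(K)/4A^{+}(K)\bigr)$ in Theorem \ref{RR}); the $2$-rank of $A^{+}(K)$ is always $t-1$ by genus theory, and the $2$-rank of the wide group $A(K)$ equals ${\rm rank}\,{\rm Gal}(K_G/K)$, which is $t-1$ or $t-2$ according to whether $K_G=K_G^{+}$ or $[K_G^{+}:K_G]=2$, i.e.\ it depends only on the residues of the prime divisors of $D_K$ modulo $4$ and not on any Legendre symbols. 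This is why the conditions in the theorem are pure congruence conditions mod $8$ with no Legendre symbols: your ``two rank-one R\'edei conditions'' for $K$ and $K'$ would generate Legendre-symbol constraints that do not (and should not) appear. In the paper the conditions ${\rm rank}\,A(K)=2$ and ${\rm rank}\,A(K')=3$ only serve, via the genus formula (Remark \ref{rmk2 to genus}) and explicit genus-field computations, to pin down that $D_K$ has $3$ or $4$ prime factors and to fix their residues mod $4$ (eliminating, e.g., $d=p_1p_2p_3p_4$ with all $p_i\equiv 1 \pmod 4$, where the rank is $3$). You also skip the step showing $t\in\{3,4\}$ in the first place, and the elimination of the cases $2\mid d$ and of the fields already treated by Mizusawa.

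The congruence conditions modulo $8$ actually come from the third hypothesis, ${\rm rank}\,A(K_1)=2$. The paper invokes the Azizi--Mouhib/Mizusawa result (Theorem \ref{rank of biq}), which gives ${\rm rank}\,A(K_1)=t_1-2$ or $t_1-3$ where $t_1$ is the number of places of $\Q(\sqrt2)$ ramified in $K_1$; since the splitting of an odd prime in $\Q(\sqrt2)$ is governed by its class mod $8$, forcing $t_1$ to the right value yields exactly the mod $8$ lists in the statement. Your plan for $K_1$ (ambiguous class number formula over $\Q(\sqrt2)$ plus Kuroda) is in the right spirit for that ingredient, but the claim that Kuroda's class number formula ``pins down the possible ranks of $A(K_1)$'' is not tenable: Kuroda gives the \emph{order} $\#A(K_1)$, and an order does not determine a rank (indeed the paper only uses Kuroda later, in Theorem \ref{A(K1) for K = p1p2q1q2)}, to get $\#A(K_1)=8$ after the rank is already known to be $2$). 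As written, your argument would neither produce the correct necessary conditions nor verify the sufficiency of the stated congruence families.
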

 
We note that for fields in Theorem \ref{rank stability}, the ranks of the $2$-class groups of the consecutive layers in the cyclotomic $\Z_2$-extension of $K$ become equal. As a result we derive the following corollary.

\begin{cor}\label{cor to rank stability}
Let $K$ be a real quadratic field as mentioned in Theorem \ref{rank stability}. Then ${\rm{rank}} \ A(K_n) = 2$ for all integer $n \geq 0$ and ${\rm{rank}} \ A(K') = 3$.
\end{cor}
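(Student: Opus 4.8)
The plan is to obtain Corollary~\ref{cor to rank stability} as a direct consequence of Theorem~\ref{rank stability} together with Fukuda's stabilization criterion \cite[Theorem 1]{fukuda}. First recall that in the cyclotomic $\Z_2$-extension $K_\infty/K$ the only primes of $K$ that ramify are those lying above $2$, because $\Q_\infty/\Q$ is ramified only at $2$ and $K_\infty = K\Q_\infty$. The hypothesis inherited from Theorem~\ref{rank stability} is that every place of $K$ above $2\mathcal{O}_K$ is totally ramified in $K_1/K$. I would first upgrade this to total ramification in the entire tower: the inertia subgroup of such a place inside ${\rm Gal}(K_\infty/K)\cong\Z_2$ is a closed subgroup, hence of the form $2^m\Z_2$, and total ramification in $K_1/K$ forces it to surject onto ${\rm Gal}(K_1/K)\cong\Z/2\Z$, whence $m=0$. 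Thus every prime ramifying in $K_\infty/K$ is already totally ramified in $K_\infty/K_0$, and the hypothesis of \cite[Theorem 1]{fukuda} is met with $n_0=0$ (cf.\ also \cite[Lemma 13.3]{washington_book}).

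Next I would quote the conclusion of Theorem~\ref{rank stability}: for each of its three families of square-free integers $d$ one has ${\rm rank}\, A(K)={\rm rank}\, A(K_1)=2$, that is, the $2$-ranks of the two consecutive layers $K_0=K$ and $K_1$ agree. Fukuda's criterion then says precisely that as soon as the $2$-ranks of two consecutive layers coincide --- at or beyond the stage where every ramified prime is totally ramified --- the $2$-rank remains constant in every higher layer. Since here that stage is $n_0=0$, we conclude ${\rm rank}\, A(K_n)=2$ for all integers $n\geq 0$. The remaining assertion ${\rm rank}\, A(K')=3$ is exactly the corresponding statement in Theorem~\ref{rank stability}, so nothing further is needed.

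The argument involves no real difficulty; the one step that deserves attention is the promotion of total ramification from $K_1$ to all of $K_\infty$, which is what allows Fukuda's theorem to be applied with $n_0=0$ --- the only value of $n_0$ for which Theorem~\ref{rank stability} supplies the required equality of consecutive ranks. With that observation the corollary follows at once.
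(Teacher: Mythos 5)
Your proposal is correct and follows essentially the same route as the paper: establish that the prime above $2$ is totally ramified throughout $K_\infty/K$ so that Fukuda's criterion applies with $n_0=0$, then invoke ${\rm rank}\,A(K)={\rm rank}\,A(K_1)=2$ and ${\rm rank}\,A(K')=3$ from Theorem \ref{rank stability}. The only cosmetic difference is that you promote total ramification from $K_1/K$ to the whole tower via the classification of closed subgroups of $\Z_2$ as inertia groups, whereas the paper does it by multiplicativity of ramification indices in the tower $\Q_n\subseteq K_n$; both are valid.
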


Our next two theorems deal with those real quadratic fields whose discriminants consist of at least one prime divisor that is congruent to $7$ modulo $8$. We state the theorems as follows.

\begin{theorem}\label{A(K1) for K = p1p2q1q2)}
Let $K = \Q(\sqrt{d})$ be a real quadratic field with $d =p_1p_2q_1q_2$, where $p_{1}, p_{2}, q_{1}$ and $q_{2}$ are distinct primes, $p_1 \equiv p_2\equiv 5 \pmod 8, q_1 \equiv 7 \pmod 8, q_2 \equiv 3 \pmod 8$. Let $K' = \Q(\sqrt{2d})$ and $K_{1} = \Q(\sqrt{2},\sqrt{d})$. Then $A(K) \simeq \Z/2\Z \oplus \Z/2\Z$, $A(K') \simeq \Z/2\Z \oplus \Z/2\Z \oplus \Z/2\Z$ and $A(K_1) \simeq \Z/2\Z \oplus \Z/4\Z$ if and only if one of the following choices of Legendre symbols holds.
\begin{enumerate}
\item $\left( \dfrac{p_1}{p_2} \right) = -1$, $\left( \dfrac{p_1}{q_1} \right) = -1$, $\left( \dfrac{p_1}{q_2} \right) = 1$, $\left( \dfrac{q_1q_2}{p_2} \right) = 1$,
\item $\left( \dfrac{p_1}{p_2} \right) = -1$, $\left( \dfrac{q_1q_2}{p_1} \right) = 1$, $\left( \dfrac{p_2}{q_1} \right) = -1$, $\left( \dfrac{p_2}{q_2} \right) = 1$,
\item $\left( \dfrac{p_1}{p_2} \right) = 1$, $\left( \dfrac{p_1p_2}{q_1} \right) = -1$, $\left( \dfrac{p_1p_2}{q_2} \right) = -1$,  $\left( \dfrac{p_1}{q_1} \right) = \left( \dfrac{p_2}{q_2} \right) $.
\end{enumerate}
\end{theorem}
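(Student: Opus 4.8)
The plan is to peel each of the three isomorphism assertions into a $2$-rank statement and an order statement, to observe that under the congruence hypotheses the $2$-ranks are forced, and to show that the three order statements are exactly what the displayed Legendre-symbol configurations encode. Since $p_1\equiv p_2\equiv5$, $q_1\equiv7$, $q_2\equiv3\pmod 8$ we have $d\equiv5\pmod 8$, so $2$ is inert in $K$ (and automatically totally ramified in $K_1/K$), $D_K=d$ has four prime factors, $D_{K'}=8d$ has five, and, because $q_2\equiv3\pmod 4$ divides both $d$ and $2d$, the fundamental units $\eps_d$ and $\eps_{2d}$ have norm $+1$. The Hilbert symbols $(-1,d)_v$ (resp.\ $(-1,2d)_v$) equal $-1$ precisely at $q_1$ and $q_2$ and $+1$ at every other place---in particular at the dyadic place, as $d\equiv1\pmod 4$, and at the archimedean place, as $d>0$---so $-1$ is a global norm from neither $K$ nor $K'$; feeding this into Chevalley's ambiguous class number formula for $K/\Q$ and $K'/\Q$ returns ${\rm rank}\,A(K)=2$ and ${\rm rank}\,A(K')=3$ with no further conditions. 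Hence $A(K)\simeq\Z/2\Z\oplus\Z/2\Z$ is equivalent to $|A(K)|=4$, i.e.\ to the vanishing of the $4$-rank of $A(K)$, and $A(K')\simeq(\Z/2\Z)^{3}$ is equivalent to the vanishing of the $4$-rank of $A(K')$.

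For the orders I would invoke R\'edei's theory, which ties the $4$-rank of $A(K)$---and of $A(K')$---to the $\mathbb{F}_2$-rank of an explicit R\'edei matrix built from the quadratic-residue symbols among the prime discriminants ($p_1,p_2,-q_1,-q_2$ for $K$, and $8,p_1,p_2,-q_1,-q_2$ for $K'$, the entries attached to $8$ being controlled by $\left(\tfrac{2}{p_1}\right)=\left(\tfrac{2}{p_2}\right)=-1$, $\left(\tfrac{2}{q_1}\right)=+1$---this is where $q_1\equiv7\pmod 8$ enters---and $\left(\tfrac{2}{q_2}\right)=-1$). Thus $|A(K)|=4$ and $|A(K')|=8$ translate into the matrices $R_K$ and $R_{K'}$ having full $\mathbb{F}_2$-rank; rewriting every entry via quadratic reciprocity in terms of the symbols $\left(\tfrac{p_1}{p_2}\right)$, $\left(\tfrac{p_i}{q_j}\right)$ and $\left(\tfrac{q_1}{q_2}\right)$ appearing in the statement, this simultaneous fullness requirement becomes a finite computation in linear algebra over $\mathbb{F}_2$, and carrying it out in both directions resolves the condition into precisely the three mutually exclusive cases (1)--(3).

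It remains to treat $A(K_1)$. Its $2$-rank is $2$ by Theorem~\ref{rank stability}, so it suffices to control $|A(K_1)|$, and for this I would use Kuroda's class number formula for the totally real biquadratic field $K_1$,
\[
h(K_1)=\tfrac14\,q(K_1)\,h(\Q(\sqrt2))\,h(K)\,h(K'),\qquad q(K_1)=[\mathcal O_{K_1}^{\times}:\mathcal O_{\Q(\sqrt2)}^{\times}\mathcal O_{K}^{\times}\mathcal O_{K'}^{\times}]\in\{1,2,4,8\},
\]
together with $h(\Q(\sqrt2))=1$: once $|A(K)|=4$ and $|A(K')|=8$ this gives $|A(K_1)|=8\,q(K_1)$, so $|A(K_1)|=8$ if and only if $q(K_1)=1$, which I would establish by ruling out that any nontrivial product of $1+\sqrt2$, $\eps_d$, $\eps_{2d}$ and $-1$ becomes a square in $K_1$---playing $N(1+\sqrt2)=-1$ against $N\eps_d=N\eps_{2d}=+1$ and using the ramification of the prime above $2$. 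Consequently $A(K_1)\simeq\Z/2\Z\oplus\Z/4\Z$ is equivalent to $q(K_1)=1$ on top of the two $4$-rank conditions, and one checks that this is already absorbed into (1)--(3); finally, since ${\rm rank}\,A(K)={\rm rank}\,A(K_1)=2$, a theorem of Fukuda \cite{fukuda} propagates rank $2$ to every layer, recovering the remark in the Introduction.

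Apart from the (long but essentially routine) $\mathbb{F}_2$-linear-algebra case analysis, the step I expect to be the main obstacle is the unit-index computation $q(K_1)=1$: it is an assertion about products of units becoming squares rather than a bare Legendre-symbol identity, so it calls for a careful local study of $K_1=\Q(\sqrt2,\sqrt d)$, and one must additionally verify that its conclusion is compatible with---indeed, forced by---each of the configurations (1)--(3) and does not split off a further case.
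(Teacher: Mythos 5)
Your architecture is essentially the paper's: the $2$-ranks of $A(K)$ and $A(K')$ are forced by genus theory (your route through Chevalley's ambiguous class number formula and the Hilbert symbols $(-1,d)_v$ is a harmless repackaging of the paper's genus-field computation), the two order statements become the vanishing of $4$-ranks and are settled by R\'edei's theory (your R\'edei matrix is the matrix form of the paper's R\'edei--Reichardt count $\#S_2(K)=\#S_2(K')=1$ from Theorem~\ref{RR}), and $A(K_1)$ is handled by Kuroda's formula $\#A(K_1)=\tfrac14\,Q(K_1)\cdot 4\cdot 8\cdot 1$, so that everything reduces to the Hasse unit index $Q(K_1)=1$.

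The genuine gap is exactly the step you flagged. Your proposed proof of $Q(K_1)=1$ --- playing $N(1+\sqrt2)=-1$ against $N\eps=N\eps'=+1$ and using the ramification of the prime above $2$ --- does not suffice: by Kubota's classification (Theorem~\ref{kubota}), the norm signature $(+1,+1,-1)$ is perfectly compatible with $\sqrt{\eps}$, $\sqrt{\eps'}$ or $\sqrt{\eps\eps'}$ lying in $K_1$, i.e.\ with $Q(K_1)>1$, and no purely local consideration at $2$ rules this out. The paper's actual argument is a principal-genus computation that consumes the Legendre data of each case: the hypotheses identify pairs of ramified primes of $K$ (e.g.\ $\mathfrak{q}_2$ above $q_2$ and $\mathfrak{l}=2\mathcal{O}_K$) having equal decomposition fields in $L(K)/K$, hence equal Artin symbols, hence equal ideal classes; squaring the resulting relation $\langle\alpha\rangle\mathfrak{q}_2=2\mathcal{O}_K$ gives $4=\eps^{n}\alpha^{2}q_2$, the exponent $n$ is forced to be odd, and therefore $K_1(\sqrt{\eps})$ is pinned down as a specific nontrivial quadratic extension such as $K_1(\sqrt{q_2})$ or $K_1(\sqrt{p_1q_2})$; the analogous computation in $K'$ gives $K_1(\sqrt{\eps'})=K_1(\sqrt{p_1})$, and the two extensions being distinct excludes $\sqrt{\eps\eps'}$ as well. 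Without an argument of this kind --- which must be run separately under each configuration (1)--(3), since the identification of the coincident decomposition fields changes with the symbols --- the conclusion $\#A(K_1)=8$ is unsupported, so your proof of the $A(K_1)$ assertion is incomplete, even though the $A(K)$ and $A(K')$ assertions (and with them the ``only if'' direction of the theorem) are within reach of your R\'edei analysis.
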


\begin{theorem}\label{A(K1) for K = q1q2q3q4}
Let $K = \Q(\sqrt{d})$ be a real quadratic number field with $d = q_1q_2q_3q_4$, where $q_{1}, q_{2}, q_{3}$ and $q_{4}$ are distinct primes with $q_1 \equiv 7 \pmod 8$, $q_2 \equiv q_3 \equiv q_4 \equiv 3 \pmod 8$. Let  $K' = \Q(\sqrt{2d})$ and $K_{1} = \Q(\sqrt{2},\sqrt{d})$. Then $A(K) \simeq \Z/2\Z \oplus \Z/2\Z$, $A(K') \simeq \Z/2\Z \oplus \Z/2\Z \oplus \Z/2\Z$ and $A(K_1) \simeq \Z/2\Z \oplus \Z/4\Z$ if one of the following choices of Legendre symbols holds: 
\begin{enumerate}
\item $\left( \dfrac{q_1}{q_3}\right)=\left( \dfrac{q_2}{q_3}\right)=\left( \dfrac{q_4}{q_2}\right)=\left( \dfrac{q_4}{q_1}\right)=1, \left( \dfrac{q_4}{q_3}\right) =-1$,
\item $\left( \dfrac{q_1}{q_3}\right)=\left( \dfrac{q_2}{q_3}\right)=\left( \dfrac{q_4}{q_2}\right)=\left( \dfrac{q_4}{q_1}\right)=-1, \left( \dfrac{q_4}{q_3}\right) =1$,
\item $\left( \dfrac{q_1q_2}{q_3}\right)=-1, \left( \dfrac{q_1q_2}{q_4}\right)=-1, \left( \dfrac{q_2}{q_3}\right) = \left( \dfrac{q_1}{q_4}\right)= \left( \dfrac{q_3}{q_4}\right)$,
\item $\left( \dfrac{q_1q_2}{q_3}\right)=1, \left( \dfrac{q_1q_2}{q_4}\right)=-1, \left( \dfrac{q_2}{q_3}\right) = \left( \dfrac{q_2}{q_4}\right), \left( \dfrac{q_1}{q_2}\right)=\left( \dfrac{q_4}{q_3}\right)$,
\item $\left( \dfrac{q_1q_2}{q_3}\right)=-1, \left( \dfrac{q_1q_2}{q_4}\right)=1, \left( \dfrac{q_2}{q_3}\right) = \left( \dfrac{q_1}{q_4}\right), \left( \dfrac{q_1}{q_2}\right) = \left( \dfrac{q_3}{q_4}\right)$,
\item $\left( \dfrac{q_1}{q_3}\right)=\left( \dfrac{q_2}{q_3}\right)= 1, \left( \dfrac{q_1}{q_4}\right)=1, \left( \dfrac{q_2}{q_4}\right)=-1, \left( \dfrac{q_1}{q_2}\right)=-1$,
\item $\left( \dfrac{q_1}{q_3}\right)=\left( \dfrac{q_2}{q_3}\right)= -1, \left( \dfrac{q_1}{q_4}\right)=-1, \left( \dfrac{q_2}{q_4}\right)=1, \left( \dfrac{q_1}{q_2}\right)=1,  \left( \dfrac{q_3}{q_4}\right)=1$,
\item $\left( \dfrac{q_1}{q_3}\right)= 1, \left( \dfrac{q_2}{q_3}\right)= -1, \left( \dfrac{q_1}{q_4}\right)=1, \left( \dfrac{q_2}{q_4}\right)=1, \left( \dfrac{q_1}{q_2}\right)=-1$,
\item $\left( \dfrac{q_1}{q_3}\right)= -1, \left( \dfrac{q_2}{q_3}\right)= 1, \left( \dfrac{q_1}{q_4}\right)=-1, \left( \dfrac{q_2}{q_4}\right)=-1, \left( \dfrac{q_1}{q_2}\right)=1, \left( \dfrac{q_3}{q_4}\right)=-1$.
\end{enumerate}
\end{theorem}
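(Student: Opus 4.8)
The plan is to establish the three isomorphisms in turn, using classical genus theory together with R\'edei's theory of the $4$-rank for the quadratic fields $K$ and $K'=\Q(\sqrt{2d})$, and a combination of Chevalley's ambiguous class number formula with Kuroda's class number formula for the biquadratic field $K_1=\Q(\sqrt2,\sqrt d)$. A preliminary observation, used repeatedly: since $q_1\equiv 7$ and $q_2\equiv q_3\equiv q_4\equiv 3\pmod 8$, we have $d\equiv 5\pmod 8$, so $D_K=d$, the prime $2$ is inert in $K$, and the unique prime of $K$ above $2$ is (totally) ramified in $K_1=K(\sqrt2)$; moreover all four fields $\Q(\sqrt2),K,K',K_1$ are totally real, so no infinite place ramifies in any of the relevant extensions. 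In particular the total‑ramification hypothesis appearing in Theorem \ref{rank stability} is automatic here, and Fukuda's criterion again yields stabilization of the $2$-ranks in the cyclotomic $\Z_2$-tower.

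\emph{The groups $A(K)$ and $A(K')$.} Since $D_K=d$ has exactly four prime divisors, all $\equiv 3\pmod 4$, the fundamental unit of $K$ has norm $+1$; hence the $2$-rank of the narrow class group of $K$ is $3$, so $\mathrm{rank}\,A(K)\in\{2,3\}$, and the $4$-rank of $A(K)$ is governed by the $\mathbb{F}_2$-rank of the R\'edei matrix $R_K$ (a $4\times 4$ matrix whose off-diagonal entries are the additive Legendre symbols $[(q_i/q_j)]$, with rows summing to zero; passing from the narrow to the ordinary class group introduces one further relation attached to the infinite place). One then checks, configuration by configuration, that in each of the nine cases these ranks are forced to be such that $\mathrm{rank}\,A(K)=2$ and the $4$-rank of $A(K)$ is $0$, whence $A(K)\simeq\Z/2\Z\oplus\Z/2\Z$. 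The same scheme handles $K'=\Q(\sqrt{2d})$: here $2d\equiv 2\pmod 4$, so $D_{K'}=8d$ has the five prime divisors $2,q_1,q_2,q_3,q_4$; the fundamental unit of $K'$ again has norm $+1$, so the narrow $2$-rank is $4$; the row and column of $R_{K'}$ attached to $2$ involve the symbols $(2/q_i)$, which the congruences fix to be $+1$ for $q_1$ and $-1$ for $q_2,q_3,q_4$; and one verifies that under each of the nine hypotheses the rank of $R_{K'}$ forces $\mathrm{rank}\,A(K')=3$ with vanishing $4$-rank, i.e.\ $A(K')\simeq\Z/2\Z\oplus\Z/2\Z\oplus\Z/2\Z$.

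\emph{The group $A(K_1)$.} I would first fix the $2$-rank and then the order. For the rank, apply Chevalley's ambiguous class number formula to $K_1/K$: only the single prime of $K$ above $2$ ramifies, with $e=2$, and no infinite place does, so (on $2$-parts) $|A(K_1)^{\mathrm{Gal}(K_1/K)}| = |A(K)|\cdot 2\big/\big(2\,[\mathcal O_K^\times:\mathcal O_K^\times\cap N_{K_1/K}K_1^\times]\big)$; combined with the matching lower bound coming from the genus field of $K_1$, this yields $\mathrm{rank}\,A(K_1)=2$. For the order, Kuroda's class number formula for the real biquadratic field $K_1$ with quadratic subfields $\Q(\sqrt2),K,K'$, together with $h(\Q(\sqrt2))=1$, gives $h(K_1)=\tfrac14\,q(K_1)\,h(K)\,h(K')$, where $q(K_1)=[\mathcal O_{K_1}^\times:\mathcal O_{\Q(\sqrt2)}^\times\mathcal O_K^\times\mathcal O_{K'}^\times]$; passing to $2$-parts, $|A(K_1)|=8\,q(K_1)$, so it remains to prove $q(K_1)=1$. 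This is a finite unit computation: using $N\varepsilon_K=N\varepsilon_{K'}=+1$ and $N\varepsilon_{\Q(\sqrt2)}=-1$ together with the Legendre/congruence hypotheses, one rules out any nontrivial product of the three fundamental units being a square in $K_1$, whence $q(K_1)=1$ and $|A(K_1)|=8$. Since a $2$-group of order $8$ and $2$-rank $2$ is necessarily $\Z/2\Z\oplus\Z/4\Z$, this completes the argument.

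\emph{The main obstacle.} Two points require care. First, the combinatorics in the quadratic step: one must verify that in each of the nine configurations the R\'edei matrices $R_K$ and $R_{K'}$ \emph{simultaneously} attain the ranks needed for the $4$-ranks of both $A(K)$ and $A(K')$ to vanish (the nine cases are precisely those that achieve this), and getting the diagonal/row‑sum entries and the archimedean relation right is delicate. Second, and more seriously, the computation $q(K_1)=1$: it depends on whether products of the fundamental units of $\Q(\sqrt2),K,K'$ become squares in $K_1$ — the point at which the precise Legendre conditions are genuinely needed — and requires a Kubota‑type analysis of the unit index of a real biquadratic field; this is the step most prone to error, and the reason the statement is phrased as a sufficient condition rather than an equivalence. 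The remaining ingredients — the ambiguous class number formula input for $K_1/K$ and the final assembly of rank plus order into an isomorphism type — are routine.
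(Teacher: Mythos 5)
Your overall architecture coincides with the paper's: R\'edei--Reichardt/genus theory to pin down $A(K)$ and $A(K')$ (your R\'edei-matrix formulation is the standard linear-algebra restatement of the paper's count of the set $S_2(K)$, and Proposition \ref{A(K) and A+(K) are 2 elmtry} supplies the passage between the narrow and ordinary class groups), followed by Kuroda's formula $\#A(K_1)=\tfrac14\,Q(K_1)\,\#A(K)\,\#A(K')\,\#A(\Q(\sqrt{2}))$, which reduces everything to $Q(K_1)=1$. The genuine gap is that the decisive step is exactly the one you leave as a black box. Proving that none of $\sqrt{\eps}$, $\sqrt{\eps'}$, $\sqrt{\eps\eps'}$ lies in $K_1$ (where $\eps,\eps'$ are the fundamental units of $K,K'$) is \emph{not} a ``finite unit computation'': the fundamental units are not explicitly known, and the data $N(\eps)=N(\eps')=+1$, $N(1+\sqrt{2})=-1$ together with the congruence/Legendre hypotheses do not by themselves exclude $\eps$, $\eps'$ or $\eps\eps'$ from becoming squares in $K_1$ --- indeed Kubota's classification (Theorem \ref{kubota}) explicitly allows $\sqrt{\eps_1}$, $\sqrt{\eps_1\eps_2}$, etc.\ to occur in a system of fundamental units precisely in this norm configuration. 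The paper's mechanism, which your proposal is missing, is the following: once $A(K)$ is known to be $2$-elementary one has $L(K)=K_G$, so the Legendre conditions determine the decomposition field of every ramified prime of $K$ in $L(K)/K$; whenever two such primes have the same decomposition field their Artin symbols coincide, giving an ideal-class identity that, upon squaring, produces a relation of the form $4=\eps^{n}\alpha^{2}q_j$ or $p=\eps^{n}\alpha_1^{2}q$ in $K^{\times}$; the exponent $n$ is forced to be odd (otherwise $\sqrt{q_j}\in K$), whence $K_1(\sqrt{\eps})$ equals an explicit nontrivial quadratic extension of $K_1$ such as $K_1(\sqrt{q_j})$, so $\sqrt{\eps}\notin K_1$. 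Repeating this for $K'$ and combining yields $\sqrt{\eps},\sqrt{\eps'},\sqrt{\eps\eps'}\notin K_1$, hence $\{\eps,\eps',1+\sqrt{2}\}$ is a fundamental system, $Q(K_1)=1$, and $\#A(K_1)=8$. Without this Artin-symbol input your order computation does not go through.

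A secondary gap concerns the rank of $A(K_1)$: Chevalley's (= the genus) formula computes $\#A(K_1)^{G}$, and converting this into $\mathrm{rank}\,A(K_1)$ requires knowing that $\mathrm{Gal}(K_1/K)$ acts by $-1$ on $A(K_1)$, equivalently that the lifting map $A(K)\to A(K_1)$ has trivial image (Remark \ref{rmk to genus}); this is not automatic here since $A(K)\simeq\Z/2\Z\oplus\Z/2\Z$ is nontrivial. The paper avoids this by obtaining $\mathrm{rank}\,A(K_1)=2$ from the Azizi--Mouhib/Mizusawa rank formula for real biquadratic fields (Theorem \ref{rank of biq}), i.e.\ by counting the places of $\Q(\sqrt{2})$ ramified in $K_1$, a computation already carried out in the proof of Theorem \ref{rank stability}. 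You should either supply the triviality of the lifting map or switch to that criterion.
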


\begin{rmk}
The arguments that will be used in the proofs of Theorem \ref{A(K1) for K = p1p2q1q2)} and Theorem \ref{A(K1) for K = q1q2q3q4} are mostly same and therefore, we shall furnish the proof only for Theorem \ref{A(K1) for K = p1p2q1q2)}.
\end{rmk}

\section{Preliminaries}
Since we wish to find the ranks and the orders of the $2$-class groups of certain real quadratic fields, the {\it Genus formula}, which relates the order of the $2$-class group in a quadratic extension of number fields to the number of ramified primes in that extension, will play a crucial role in the proofs of the theorems. We record it as follows.
 
\begin{theorem}(Genus Formulae) \cite[Theorem 2.5]{mizu_thesis}\label{genusfor}
Let $K/k$ be a quadratic extension of number fields with Galois group $G = {\rm{Gal}} \left(K/k\right)$. Let $A(K)^{G}$ be the subgroup of $A(K)$ generated by the ideal classes in $A(K)$ that are fixed by the action of $G$ on $A(K)$. Let $N_{K/k}$ stand for the norm map from $K$ to $k$. Let $E(K)$ and $E(k)$ be the unit groups of $K$ and $k$, respectively. If $t$ is the number of places of $k$ ramified in $K$, then the following equality holds.
\begin{align*}
\#A(K)^{G} &= \#A(k) \times \dfrac{2^{t-1}}{\left[ E(k):E(k)\cap N_{K/k}K^{\times} \right]}.
\end{align*}
\end{theorem}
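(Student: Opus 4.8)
The plan is to obtain the stated identity as the $2$-primary part of Chevalley's ambiguous class number formula, which I would prove by a Galois-cohomological analysis of the standard exact sequences attached to $K/k$. Write $G = \mathrm{Gal}(K/k) = \langle \sigma \rangle$ with $|G| = 2$, let $N = N_{K/k}$, and let $I_K$ and $P_K$ be the groups of fractional ideals and of principal fractional ideals of $K$, so that $Cl_K = I_K/P_K$. The two fundamental short exact sequences of $G$-modules are
\[ 1 \longrightarrow E(K) \longrightarrow K^{\times} \longrightarrow P_K \longrightarrow 1, \qquad 1 \longrightarrow P_K \longrightarrow I_K \longrightarrow Cl_K \longrightarrow 1 . \]
Since $I_K$ is the free abelian group on the primes of $K$, it is a permutation module, i.e.\ a direct sum of modules $\mathrm{Ind}_{G_{\mathfrak P}}^{G} \Z$ indexed by the primes $\mathfrak p$ of $k$ (with $G_{\mathfrak P}$ a decomposition group); by Shapiro's lemma $H^{1}(G, I_K) = \bigoplus_{\mathfrak p} H^{1}(G_{\mathfrak P}, \Z) = 0$. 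Taking $G$-cohomology of the second sequence therefore gives the exact sequence
\[ 1 \longrightarrow P_K^{G} \longrightarrow I_K^{G} \longrightarrow Cl_K^{G} \longrightarrow H^{1}(G, P_K) \longrightarrow 1 . \]

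Next I would evaluate the two new terms. For $H^{1}(G,P_K)$, taking $G$-cohomology of the first sequence and invoking Hilbert's Theorem~90, $H^{1}(G, K^{\times}) = 0$, identifies $H^{1}(G, P_K)$ with $\ker\!\big(H^{2}(G, E(K)) \to H^{2}(G, K^{\times})\big)$; as $G$ is cyclic of order $2$ one has $H^{2}(G, M) \cong M^{G}/N M$ for every $G$-module $M$, so this kernel is $\big(E(k) \cap N K^{\times}\big)\big/ N E(K)$, and it is the cokernel of $I_K^{G} \to Cl_K^{G}$. For $I_K^{G}$, every ambiguous ideal is a product of an extended ideal $\mathfrak b \mathcal O_K$ (with $\mathfrak b$ an ideal of $k$) and the finitely many ramified primes $\mathfrak P_1, \dots, \mathfrak P_t$ of $K$, which satisfy $\mathfrak P_i^{2} = \mathfrak p_i \mathcal O_K$; hence $I_K^{G} / I_k \mathcal O_K \cong (\Z/2\Z)^{t}$, and the image of $I_K^{G}$ in $Cl_K^{G}$ (the strongly ambiguous classes) is generated by the image of $Cl_k$ together with the classes $[\mathfrak P_1], \dots, [\mathfrak P_t]$.

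The step I expect to be the real obstacle is the numerical bookkeeping that turns these qualitative facts into the clean closed form: one must track $\#\,\mathrm{im}(Cl_k \to Cl_K)$, the capitulation kernel $\ker(Cl_k \to Cl_K)$, and the $\mathbb F_2$-rank of the relations among $[\mathfrak P_1], \dots, [\mathfrak P_t]$ modulo extended classes, and show that together with $[E(k)\cap NK^{\times} : NE(K)]$ they collapse to the single index $[E(k) : E(k) \cap N K^{\times}]$ and the factor $2^{t-1} = \prod_v e_v / [K:k]$ (here no archimedean place ramifies, so $\prod_v e_v = 2^{t}$). This is exactly Chevalley's original computation; it is cleanest to close the remaining gaps with one further Herbrand-quotient argument, using that the Herbrand quotient of a finite $G$-module is $1$ and the known Herbrand quotient of $E(K)$, so that no factor is left unaccounted. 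The outcome is $\#Cl_K^{G} = \#Cl_k \cdot 2^{t-1} / [E(k) : E(k) \cap N K^{\times}]$. Finally I would pass to $2$-primary parts: the decomposition of $Cl_K$ into its $2$-Sylow subgroup $A(K)$ and its odd-order complement is $G$-stable because both summands are characteristic, and $[E(k):E(k)\cap NK^{\times}]$ is a power of $2$ since $u^{2} = N(u) \in NK^{\times}$ for all $u \in E(k)$, so $E(k)^{2} \subseteq E(k)\cap NK^{\times}$. Comparing $2$-parts of the displayed equality then yields $\#A(K)^{G} = \#A(k)\cdot 2^{t-1}/[E(k):E(k)\cap NK^{\times}]$, which is the assertion.
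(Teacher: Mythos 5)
The paper does not prove this statement; it quotes it from Mizusawa's thesis, and it is exactly the $2$-primary part of Chevalley's ambiguous class number formula, which is the route you take. Your cohomological skeleton is the standard and correct one: $H^{1}(G,I_K)=0$ by Shapiro, the identification $H^{1}(G,P_K)\cong \left(E(k)\cap N K^{\times}\right)/N E(K)$ via Hilbert 90 and the periodicity of cohomology of cyclic groups, the description of $I_K^{G}$ by extended ideals and ramified primes, and a final appeal to the Herbrand quotient of $E(K)$. So there is no conflict of method to report; the issues are with completeness and with one hypothesis you quietly impose.

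First, the archimedean places. In the theorem $t$ counts \emph{all} places of $k$ ramified in $K$, infinite ones included, and the formula fails if only finite places are counted: for $k=\Q$, $K=\Q(\sqrt{-p})$ with $p\equiv 3\pmod 4$ one needs $t=2$ (the prime $p$ and the real place) to get $\#A(K)^{G}=2^{1}/2=1$. Your aside that ``here no archimedean place ramifies'' is not part of the hypotheses. In the general case the quotient $I_K^{G}/I_k\mathcal{O}_K$ accounts only for $2^{t_0}$, where $t_0$ is the number of ramified \emph{finite} places; the remaining factor $2^{r_\infty}$ enters through the Herbrand quotient $q(E(K))=2^{r_\infty}/2$, not through the ambiguous ideals. (For the applications in this paper all fields are totally real, so the restriction is harmless there, but the theorem is stated in general.) Second, the ``numerical bookkeeping'' you defer is the actual content: from the invariants sequence $1\to E(k)\to k^{\times}\to P_K^{G}\to H^{1}(G,E(K))\to 1$ one gets $\#\left(I_K^{G}/P_K^{G}\right)=2^{t_0}h_k/\#H^{1}(G,E(K))$, and then the substitution $\#H^{1}(G,E(K))=2^{1-r_\infty}\,\#\left(E(k)/NE(K)\right)$ is what converts the cohomological group $\left(E(k)\cap NK^{\times}\right)/NE(K)$ into the index $\left[E(k):E(k)\cap NK^{\times}\right]$ and produces $2^{t-1}$ with $t=t_0+r_\infty$. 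As written you name these tools but do not execute the computation, so the proposal is an outline rather than a proof. The final passage to $2$-parts is fine once that is done: the unit index is a power of $2$ as you note, and the odd part of $Cl_K^{G}$ equals the odd part of $Cl_k$ because $N\circ j$ and $j\circ N=1+\sigma$ are multiplication by $2$ on $Cl_k$ and on $Cl_K^{G}$ respectively, hence isomorphisms on odd parts.
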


\begin{rmk}\cite[Remark 2.6]{mizu_thesis}\label{rmk to genus}
If $K/k$ is a quadratic extension of number fields and the image of the lifting map $j: A(k) \rightarrow A(K)$ is trivial, then the non-trivial element of ${\rm{Gal}}(K/k)$ acts as $-1$ on $A(K)$. In that case, $A(K)^{G}$ is generated by the elements in $A(K)$ of order $2$. Consequently, $\# A(K)^{G} = \# \left( A(K)/2A(K)\right) = 2^{\ {\rm{rank}} \ A(K)}$.
\end{rmk}

\begin{rmk}\label{rmk2 to genus}
Let $K = \Q(\sqrt{d})$ be a real quadratic field and assume that ${\rm{rank}} \ A(K)=2$. Since $A(\Q)$ is trivial, the lifting map $j: A(\Q) \rightarrow A(K)$ is trivial. By Remark \ref{rmk to genus}, we obtain $\#A(K)^{G} = 2^2 = 4$ and by Theorem \ref{genusfor}, we have
\begin{equation}\label{remarkwala}
\#A(K)^{G} = \dfrac{\#A(\Q)\cdot 2^{t-1}}{\left[E(\Q):E(\Q)\cap N_{K/\Q}(K^{\times})\right]}.
\end{equation}
Since $E(\Q) = \{-1, 1\}$, the denominator in \eqref{remarkwala} is either $1$ or $2$. As a result, we have $4 = \dfrac{2^{t-1}}{n}$, where $n =1$ or $2$. Therefore, $t=3$ or $4$.
\end{rmk}
In the case of a quadratic number field $K$, its genus field $K_{G}$, which is defined as the maximal abelian extension of $\Q$ contained inside the Hilbert class field $H_{K}$ of $K$, plays a pivotal role in determining the $2$-rank of $Cl_{K}$. The genus field can be obtained from the narrow genus field $K_G^{+}$ from the relation that when $K$ is imaginary, we have $K_G = K_G^{+}$ and when $K$ is real, we have $K_G = K_G^{+} \cap \mathbb{R}$. The narrow genus field of $K = \Q(\sqrt{d})$ with discriminant $D_{K}$ can be computed by the following algorithm. We express the prime factorization of $D_{K}$ by $D_{K} = \pm 2^{e}p_1^{\ast}\cdots p_t^{\ast}$, where 
\begin{equation*}
  p_i^{\ast} = 
      \begin{cases}
        p_i & \text{if} \  p_i \equiv 1 \pmod 4\\
        -p_i & \text{if}\ p_i \equiv 3 \pmod 4.
      \end{cases}     
\end{equation*}
With these $p_i^{\ast}$'s, we have $K_G^{+} = \mathbb{Q}( \sqrt{d}, \sqrt{p_1^{\ast}}, \cdots, \sqrt{p_t^{\ast}})$. 
 
From this expression, we observe that ${\rm{rank}} \ {\rm{Gal}}(K_G /K) = {\rm{rank}} \ A(K)$ (cf. \cite[Section 2.2]{mizu_thesis}). It is an elementary fact that an abelian $2$-group $G$ is $2$-elementary if and only if $ \#\left(2G/4G\right) = 1$. Let $A^{+}(K)$ be the $2$-Sylow subgroup of the narrow class group $Cl_{K}^{+}$ of $K$. A theorem of R\'{e}dei and Reichardt (cf. \cite{redei-reichardt}, \cite[Theorem 2.4]{mizu_thesis}) connects $\# (2A^{+}(K)/4A^{+}(K))$, to the number of ways of expressing $D_{K}$ as the product of two factors satisfying certain criteria. Let $S_1(K)$ and $S_2(K)$ be the sets of tuples $(D_1, D_2)$ defined as follows:
\begin{align*}
S_1(K) &:= \{ (D_1, D_2): |D_1| < |D_2|,\ D_{K}= D_1D_2,\ D_i \equiv 0 \mbox{ or } 1\pmod 4 \}, \\
T_1(K) &:= \{(D_1, D_2) \in S_1(K):\chi_{D_1}(p) = 1 \text{ for all prime $p$ dividing $D_2$}\},\\
T_2(K) &:= \{(D_1, D_2) \in S_1(K): \chi_{D_2}(p) = 1 \text{ for all prime $p$ dividing $D_1$}\},\\
S_2(K) &:= \{(1, D_{K})\} \ \cup \left( T_1(K) \cap T_2(K) \right),
\end{align*}
where $\chi_{D_{i}}(p) = \left(\dfrac{D_{i}}{p}\right)$ is the Kronecker symbol, for $i = 1 \mbox{ and } 2$. 

\begin{theorem}(\cite{redei-reichardt}, \cite[Theorem 2.4]{mizu_thesis})\label{RR}
With the quantitites defined above, we have $$\# S_1(K) = \#\left(A^{+}(K) / 2A^{+}(K)\right) ~ \mbox{ and }  ~ \#S_2(K) = \# \left(2A^{+}(K) / 4A^{+}(K)\right).$$
\end{theorem}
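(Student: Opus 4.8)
The plan is to prove the two equalities separately: the first by genus theory together with a counting argument, the second by reducing, via Pontryagin duality, to R\'edei's criterion for the solvability of an associated ternary quadratic form.

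\textbf{First equality.} I would write $D_{K} = d_{1}\cdots d_{t}$ as the product of its prime discriminants, so that each $d_{i}$ is one of $-4$, $\pm 8$, a prime $p \equiv 1 \pmod 4$, or $-p$ with $p \equiv 3 \pmod 4$. Every pair $(D_{1},D_{2})$ with $D_{1}D_{2} = D_{K}$ and $D_{1}, D_{2} \equiv 0,1 \pmod 4$ arises by distributing the factors $d_{i}$ between $D_{1}$ and $D_{2}$, giving $2^{t}$ ordered such pairs; since $D_{K}$ is not a perfect square no two of the $|D_{i}|$ coincide, so imposing $|D_{1}| < |D_{2}|$ leaves exactly $2^{t-1}$, i.e. $\#S_{1}(K) = 2^{t-1}$. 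On the other hand, from $K_{G}^{+} = \Q(\sqrt{d_{1}},\ldots,\sqrt{d_{t}})$ one gets $[K_{G}^{+}:\Q] = 2^{t}$, and since $K \subseteq K_{G}^{+}$ the group $\mathrm{Gal}(K_{G}^{+}/K)$ is $2$-elementary of rank $t-1$; as the genus characters generate $\widehat{A^{+}(K)}[2]$ (equivalently, $K_{G}^{+}$ is the fixed field of $2A^{+}(K)$ under the Artin map), this yields $\#\big(A^{+}(K)/2A^{+}(K)\big) = 2^{t-1} = \#S_{1}(K)$.

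\textbf{Second equality.} For any finite abelian $2$-group $A$ one has $\#(2A/4A) = 2^{\mathrm{rank}_{4}A}$, and dualizing, $2A^{+}(K)/4A^{+}(K)$ is dual to $\widehat{A^{+}(K)}[4]/\widehat{A^{+}(K)}[2]$, on which the squaring map is injective with image the set of genus characters that are squares in $\widehat{A^{+}(K)}$. Using the bijection between genus characters and $S_{1}(K)$ from the first step, it then suffices to prove that the genus character $\chi_{(D_{1},D_{2})}$ attached to $(D_{1},D_{2}) \in S_{1}(K)$ is a square if and only if $(D_{1},D_{2}) \in S_{2}(K)$, the trivial pair $(1,D_{K})$ corresponding to the trivial character. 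I would fix such a pair, let $F = K(\sqrt{D_{1}}) = K(\sqrt{D_{2}}) \subseteq K_{G}^{+}$ be the associated quadratic subextension, and observe by class field theory that $\chi_{(D_{1},D_{2})}$ is a square exactly when $F$ embeds in a cyclic quartic extension $L/K$ unramified at every finite prime of $K$. R\'edei's construction builds such an $L$ from a primitive integral solution $(x,y,z)$ of
\[
x^{2} - D_{1}y^{2} - D_{2}z^{2} = 0,
\]
namely $L = F\big(\sqrt{x+y\sqrt{D_{1}}}\big)$ after a suitable sign and congruence normalization of the generator; one then checks, place by place, that $L/K$ is unramified at every finite prime and is cyclic of degree $4$. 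By Legendre's theorem on ternary forms (equivalently Hasse--Minkowski), this equation has a nontrivial solution precisely when the local conditions at $\infty$, at $2$, and at the odd primes dividing $D_{1}D_{2}$ hold, and these unwind exactly to $\chi_{D_{1}}(p) = 1$ for all $p \mid D_{2}$ and $\chi_{D_{2}}(p) = 1$ for all $p \mid D_{1}$, i.e. $(D_{1},D_{2}) \in T_{1}(K) \cap T_{2}(K)$. For the converse, given $F \subseteq L$ with $L/K$ cyclic quartic and unramified at all finite primes, writing a Kummer generator of $L/F$ and taking its norm down to $K$ would force the same ternary form to be solvable, hence the symbol conditions.

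\textbf{Expected main obstacle.} The first equality and the duality reduction are routine; the real work lies in the second equality, and precisely in R\'edei's explicit construction --- verifying that $L = F\big(\sqrt{x+y\sqrt{D_{1}}}\big)$ is genuinely cyclic of degree $4$ over $K$ and unramified everywhere, including at the prime $2$ and at the archimedean places (which is where the congruence types of the $d_{i}$ and the normalization $|D_{1}| < |D_{2}|$ are needed), together with matching the local solvability conditions of the ternary form with the conditions defining $T_{1}(K) \cap T_{2}(K)$.
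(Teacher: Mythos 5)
The paper does not prove this statement at all: it is imported verbatim from R\'edei--Reichardt \cite{redei-reichardt} and from Mizusawa's thesis, and is used later as a black box (via the observation that $\#S_2(K)=1$ forces $A^{+}(K)$ to be $2$-elementary). So there is no ``paper's proof'' to compare against; what you have written is an outline of the classical proof of the cited theorem, and as an outline it is sound and follows the standard route. The first equality is correctly reduced to the two standard facts that discriminantal factorizations of $D_K$ biject with subsets of its prime-discriminant factors, and that $\mathrm{Gal}(K_G^{+}/K)\simeq Cl_K^{+}/(Cl_K^{+})^{2}$ (which uses that the nontrivial automorphism acts as inversion on $Cl_K^{+}$ because $\mathfrak{a}^{1+\sigma}$ is generated by a positive rational --- worth saying explicitly). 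The duality reduction for the second equality is also correct: the number of order-$\le 2$ characters of $A^{+}(K)$ that are squares equals $\#\bigl(\widehat{A^{+}(K)}[4]/\widehat{A^{+}(K)}[2]\bigr)=\#\bigl(2A^{+}(K)/4A^{+}(K)\bigr)$, and such a character is a square precisely when the corresponding quadratic subextension of $K_G^{+}/K$ embeds in a cyclic quartic extension unramified at all finite places. You have also correctly located where the real work lies (R\'edei's normalized solutions of $x^{2}-D_1y^{2}-D_2z^{2}=0$, cyclicity of $F(\sqrt{x+y\sqrt{D_1}})/K$, non-ramification at $2$, and the converse direction), and deferred it; none of those steps would fail. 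Two small corrections: since the theorem concerns the \emph{narrow} class group, $L/K$ need only be unramified at the finite places, so ramification at the archimedean places is not an obstacle you need to remove --- and in fact the local solvability condition at $\infty$ is automatic from the finite ones by the Hilbert product formula, which is exactly why the symbol conditions defining $T_1(K)\cap T_2(K)$ (all at finite primes) suffice even when both $D_i$ could a priori be negative. Also, the normalization $|D_1|<|D_2|$ plays no role in the construction of $L$; it only serves to count unordered factorizations once.
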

\begin{rmk}
We observe from Theorem \ref{RR} that $A^{+}(K)$ is $2$-elementary if and only if $\#S_2(K) = 1$. Also in that case, we have $\#S_1(K) = \#A^{+}(K)$.  
\end{rmk}  
Since $Cl_{K}$ is a subgroup of the narrow class group $Cl^{+}_{K}$, the $2$-Sylow subgroup $A(K)$ of $Cl_{K}$ is a subgroup of the $2$-Sylow subgroup $A^{+}(K)$ of $Cl^{+}_{K}$. Therefore, if $A^{+}(K)$ is a $2$-elementary group, then so is $A(K)$. The next proposition provides a sufficient condition for the converse to hold.
\begin{propn}\label{A(K) and A+(K) are 2 elmtry}
Let $K = \Q(\sqrt{d})$ be a quadratic field, where that $d \geq 1$ is a square-free integer having a prime divisor which is congruent to $3 \pmod 4$. If $A(K)$ is $2$-elementary, then so is $A^{+}(K)$.
\end{propn}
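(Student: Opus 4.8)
The plan is to exploit the exact sequence relating the narrow class group to the wide one. For a real quadratic field $K=\Q(\sqrt d)$ there is a surjection $Cl_K^{+}\twoheadrightarrow Cl_K$ whose kernel is generated by the class of the "ambiguous" ideal coming from the infinite places; concretely the kernel is trivial if the fundamental unit $\varepsilon_K$ has norm $-1$, and is of order $2$ if $N(\varepsilon_K)=+1$. Since $d$ has a prime divisor $q\equiv 3\pmod 4$, the equation $N(\varepsilon_K)=-1$ is impossible (a classical fact: if $x^2-dy^2=-1$ were solvable then $-1$ would be a square mod $q$, contradicting $q\equiv 3\pmod 4$). Hence we are in the case $N(\varepsilon_K)=+1$, and there is a short exact sequence of finite abelian $2$-groups
\begin{equation*}
0 \longrightarrow \Z/2\Z \longrightarrow A^{+}(K) \longrightarrow A(K) \longrightarrow 0 .
\end{equation*}

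Next I would argue purely group-theoretically. Suppose $A(K)$ is $2$-elementary, i.e. $2A(K)=0$; I want to conclude $2A^{+}(K)=0$. From the exact sequence, $A^{+}(K)$ is an extension of an elementary abelian $2$-group by $\Z/2\Z$, so $4A^{+}(K)=0$ automatically, and $A^{+}(K)$ fails to be $2$-elementary only if the extension is non-split, i.e. only if $A^{+}(K)$ contains an element of order $4$. Equivalently, writing $C$ for the order-$2$ kernel, non-$2$-elementariness of $A^{+}(K)$ means $C\subseteq 2A^{+}(K)$, i.e. the kernel class is a square in $Cl_K^{+}$. So the task reduces to showing: \emph{under the hypothesis $2A(K)=0$, the canonical order-$2$ class $C\in A^{+}(K)$ is not a square.} This is exactly the kind of statement detected by the R\'edei--Reichardt machinery of Theorem~\ref{RR}: $A^{+}(K)$ is $2$-elementary iff $\#S_2(K)=1$, i.e. the only factorization $D_K=D_1D_2$ into discriminants with $\chi_{D_1}(p)=1$ for all $p\mid D_2$ and $\chi_{D_2}(p)=1$ for all $p\mid D_1$ is the trivial one $(1,D_K)$.

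So the concrete plan is: assume $A(K)$ is $2$-elementary but $A^{+}(K)$ is not, and derive a contradiction. By the Remark following Theorem~\ref{RR}, non-$2$-elementariness gives a nontrivial pair $(D_1,D_2)\in T_1(K)\cap T_2(K)$; because $d$ (hence $D_K$) has a prime factor $q\equiv 3\pmod 4$, that prime sits in exactly one of $D_1,D_2$, and I would track the sign/parity of $D_1$ and $D_2$ as genuine discriminants. The point is that a nontrivial such factorization of $D_K$ with \emph{both} Kronecker conditions holding forces, via quadratic reciprocity, a relation that can be re-read as an unramified quadratic extension of $K$ which is \emph{not} obtained by adjoining a square root of a positive rational — i.e. it produces a genuine order-$4$ element already in $A(K)$, contradicting $2A(K)=0$. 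Equivalently one can phrase it through the genus field: the data $(D_1,D_2)$ produces a class in $A^{+}(K)$ of order $4$ whose square is the infinite-place class $C$; projecting to $A(K)$ kills $C$, and I must check the image still has order $4$ — this is where the hypothesis that a prime $\equiv 3\pmod4$ divides $d$ is used, to rule out the degenerate possibility that the order-$4$ element maps into the kernel $C$.

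The main obstacle is precisely this last compatibility check: showing that the order-$4$ element of $A^{+}(K)$ produced by R\'edei--Reichardt does \emph{not} collapse upon projecting to $A(K)$. In principle a non-split extension $0\to\Z/2\to A^{+}(K)\to A(K)\to 0$ with $2A(K)=0$ would require the order-$4$ element to have square equal to the kernel, so that after projection it becomes an order-$2$ element of $A(K)$ — consistent with $2A(K)=0$. Thus a naive count does not immediately give a contradiction; one genuinely needs the arithmetic input that $N(\varepsilon_K)=+1$ \emph{and} the specific behavior of the prime $q\equiv 3\pmod4$ in the R\'edei factorization to pin down that $D_1,D_2$ can be chosen with the "wrong" sign, forcing the relevant extension to be ramified at infinity and hence to survive in $A(K)$ as an honest order-$4$ element. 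I would handle this by splitting into the subcases according to which of $D_1,D_2$ carries the factor $-q$ (equivalently, which is negative), and in each subcase convert the Kronecker-symbol conditions into the statement that $\sqrt{D_1}$ (or $\sqrt{D_2}$) generates an unramified extension of $K$ distinct from the genus subfield, contradicting $\mathrm{rank}\,A(K)=\mathrm{rank}\,\mathrm{Gal}(K_G/K)$ together with $2A(K)=0$.
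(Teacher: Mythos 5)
Your opening is correct and matches the paper: the prime $q\equiv 3\pmod 4$ forces $N_{K/\Q}(\eps)=+1$, hence $\#A^{+}(K)=2\,\#A(K)$ and the exact sequence $0\to\Z/2\Z\to A^{+}(K)\to A(K)\to 0$. But from there your argument has a genuine gap, which you yourself flag and do not close: if $A^{+}(K)$ has an element of order $4$ whose square is the kernel class $C$, its image in $A(K)$ has order $2$, so no contradiction with $2A(K)=0$ falls out of the group extension alone, and your proposed rescue does not work. You suggest producing ``an unramified quadratic extension of $K$ distinct from the genus subfield,'' but no such thing can exist: the quadratic subextensions of $L(K)/K$ correspond to index-$2$ subgroups of $A(K)$, which are the same as the index-$2$ subgroups of $A(K)/2A(K)\simeq{\rm Gal}(K_G/K)$, so \emph{every} unramified quadratic extension of $K$ already lies in $K_G$, whether or not $A(K)$ is elementary. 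Non-elementariness of $A^{+}(K)$ is detected only by cyclic quartic unramified-outside-infinity extensions, i.e.\ by the R\'edei--Reichardt pairs $(D_1,D_2)$, and relating the $4$-rank of $A^{+}(K)$ to that of $A(K)$ at this level of generality is exactly the hard point you leave unresolved.

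The missing idea is much simpler and is what the paper uses: compute both genus fields explicitly. Writing $d=p_1\cdots p_m q_1\cdots q_n$ with $p_i\equiv 1$, $q_j\equiv 3\pmod 4$, one has $K_G^{+}=\Q(\sqrt{p_1},\ldots,\sqrt{p_m},\sqrt{-q_1},\ldots,\sqrt{-q_n})$ and $K_G=K_G^{+}\cap\R$, so that $[K_G^{+}:K_G]=2$ and hence ${\rm rank}\,A^{+}(K)={\rm rank}\,A(K)+1$. If $A(K)$ is $2$-elementary then $\#A(K)=2^{{\rm rank}\,A(K)}=[K_G:K]$, i.e.\ $L(K)=K_G$; combining with $\#A^{+}(K)=2\,\#A(K)$ gives $\#A^{+}(K)=2^{{\rm rank}\,A^{+}(K)}=[K_G^{+}:K]$, so $L^{+}(K)=K_G^{+}$ and $A^{+}(K)\simeq{\rm Gal}(K_G^{+}/K)$ is $2$-elementary. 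In short: the equality ${\rm rank}\,A^{+}(K)={\rm rank}\,A(K)+1$, read off from the genus fields, is the arithmetic input that replaces the entire R\'edei--Reichardt analysis you were attempting; without it (or an equivalent), your proof does not go through.
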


\begin{proof}
Let $r \equiv 3 \pmod {4}$ be a prime divisor of $d$. First, we claim that the fundamental unit $\varepsilon = \dfrac{a + b\sqrt{d}}{2}$ of $K$, where $a$ and $b$ are rational integers of same parity, is of absolute norm $N_{K/\Q}(\varepsilon) = 1$. For if $N_{K/\Q}(\varepsilon) = -1$, then reading the equation $N_{K/\Q}(\varepsilon) = \dfrac{a^{2} - db^{2}}{4} = -1$ modulo $r$, we obtain $a^{2} \equiv -4 \pmod {r}$. This implies that $-1$ is a quadratic residue $\pmod {r}$, which is impossible since $r \equiv 3 \pmod {4}$. Therefore, $N_{K/\Q}(\varepsilon) = 1$.

\smallskip

Assume that $d$ is odd with the prime factorization $d = p_1\cdots p_m \cdot q_1\cdots q_n$, where $p_i \equiv 1 \pmod 4$ and $q_j \equiv 3 \pmod 4$. Since the rest of the argument is similar for $n$ being even or odd, we furnish the proof assuming that $n$ is even. Then $-q_n = \dfrac{d}{\prod\limits_{i=1}^{m}p_i\prod\limits_{j=1}^{n-1}(-q_j)}$. Thus we have $$K_G^{+} = \Q( \sqrt{d}, \sqrt{p_1},\ldots,\sqrt{p_m},\sqrt{-q_1},\ldots, \sqrt{-q_{n-1}})$$ and $$K_{G} = K_{G}^{+} \cap \R = \Q(\sqrt{d},\sqrt{p_{1}},\ldots,\sqrt{p_{m}},\sqrt{q_{1}q_{2}},\ldots,\sqrt{q_{1}q_{n - 1}}).$$ Consequently, $\left[K_G:K \right] = 2^{m+n-2}$ and $[K_{G}^{+} : K_{G}] = 2$. Since ${\rm{Gal}}(K_G/\Q)$ is $2$-elementary, so is ${\rm{Gal}}(K_G/K)$. Hence ${\rm{rank}} \ A(K) = {\rm{rank}} \ {\rm{Gal}}(K_G/K) = m+n-2$. According to our hypothesis, $A(K)$ is $2$-elementary and therefore, $A(K) \simeq \displaystyle\bigoplus_{m+n-2}\Z/ 2\Z$, and hence $\left[ L(K):K\right] = 2^{m+n-2}$, where $L(K)$ is the $2$-Hilbert class field of $K$. We see that $K \subseteq K_G$ and $L(K) \subseteq H_{K}$, where both $K_G$ and $L(K)$ have the same degree over $K$, which equals $2^{m+n-2}$. Since $L(K)$ is the subfield of $H_{K}$ with maximal $2$-power degree over $K$, we conclude that $L(K)=K_G$.

\smallskip

Since $d \geq 1$ is square-free and $N_{K/\Q}(\eps) =1$, by a standard result (cf. \cite[Exercise 3.2]{nancy_childress}), we have $\# Cl^{+}_{K} = 2 \times \#Cl_{K}$ and thus $\# A^{+}(K) = 2 \times \# A(K) = 2^{m+n-1} = [L^{+}(K) : K]$. This indicates that $L^{+}(K)$, which is contained inside the narrow Hilbert class field of $K$ and is the fixed field corresponding to $A^{+}(K)$, and $K^{+}_G$ have the same degree over $K$. Once again, we obtain $L^{+}(K) = K^{+}_G$. Since ${\rm{Gal}} (K^{+}_G/K)$ is $2$-elementary, therefore $A^{+}(K) = {\rm{Gal}}(L^{+}(K)/K)$ must be $2$-elementary. The proof follows a similar line of argument for ${\rm{rank}} \ A(K) = m+n-1$ and also when $2$ divides $d$.
\end{proof}
Azizi and Mouhib \cite{azizi}, and subsequently Mizusawa \cite{mizu_thesis} established a criterion on the $2$-rank of the ideal class group of totally real bi-quadratic fields. This enables us to calculate the $2$-rank of $A(K_1)$.

\begin{theorem}(\cite{azizi}, \cite[Theorem 2.7]{mizu_thesis})\label{rank of biq}
Let $K = \mathbb{Q}(\sqrt{d})$ be a real quadratic field where $d \geq 1$ is an odd square-free integer. Let $t_1$ be the number of places of $\mathbb{Q}(\sqrt{2})$ ramified in $K_1 = \Q(\sqrt{2},\sqrt{d})$, and let $r_1$ be the $2$-rank of $A(k_1)$. Then the following hold.
\begin{enumerate}
\item If $d$ has a prime factor congruent to $3 \pmod 4$, then either $r_1 = t_1 -2$ or $r_1 = t_1 - 3$. In particular, $r_1 = t_1 - 2$ holds if and only if $d$ has no prime factor which is congruent to $7 \pmod 8$.
\item If $d$ has no prime factor congruent to $3 \pmod 4$, then either $r_1 = t_1 -1$ or $r_1 = t_1 - 2$. In particular, $r_1 = t_1 -1$ holds if and only if $d$ has no prime factor $p$ such that $p \equiv 1 \pmod 8$ and $2^{\frac{p-1}{4}} \not\equiv (-1)^{\frac{p-1}{8}} \pmod p$.
\end{enumerate}
\end{theorem}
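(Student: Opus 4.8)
This is the classical genus-theoretic computation (it is the content of the cited results of Azizi and of Mizusawa's thesis); here is how I would organize it. Write $k_1=\Q(\sqrt2)$ and view $K_1=k_1(\sqrt d)$ as a quadratic extension of $k_1$, with $G=\mathrm{Gal}(K_1/k_1)$. Since $h(k_1)=1$, the group $A(k_1)$ is trivial, so the lifting map $A(k_1)\to A(K_1)$ has trivial image; by Remark~\ref{rmk to genus} the nontrivial element of $G$ acts as $-1$ on $A(K_1)$ and $\#A(K_1)^{G}=2^{r_1}$. Feeding $\#A(k_1)=1$ into the Genus Formula (Theorem~\ref{genusfor}) therefore gives
\[
2^{r_1}=\frac{2^{\,t_1-1}}{\big[E(k_1):E(k_1)\cap N_{K_1/k_1}K_1^{\times}\big]},
\]
so if the unit index equals $2^{n}$ then $r_1=t_1-1-n$, and everything reduces to computing $n$.

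Next I would bound and then pin down $n$. Here $E(k_1)=\langle-1\rangle\times\langle\varepsilon_2\rangle$ with $\varepsilon_2=1+\sqrt2$ and $N_{k_1/\Q}(\varepsilon_2)=-1$; since $N_{K_1/k_1}(y)=y^{2}$ for $y\in k_1^{\times}$, we have $E(k_1)^{2}\subseteq N_{K_1/k_1}K_1^{\times}$, so $n\le2$ and $r_1\in\{t_1-1,\,t_1-2,\,t_1-3\}$. The quotient $E(k_1)/(E(k_1)\cap N_{K_1/k_1}K_1^{\times})$ is generated by the classes of $-1$ and $\varepsilon_2$, so $n$ is determined by which of $-1,\varepsilon_2,-\varepsilon_2$ are norms from $K_1$. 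Because $K_1/k_1$ is cyclic of degree $2$, the Hasse norm theorem lets me check this place by place: a unit of $k_1$ is automatically a local norm at every archimedean place (these split in $K_1$, as $d>0$) and at every finite place unramified in $K_1/k_1$, so I only have to evaluate the norm residue symbol $(\eta,d)_{v}$ at the $t_1$ ramified finite places — the prime $\mathfrak p_2=(\sqrt2)$ when it ramifies, and the primes of $k_1$ above the odd $p\mid d$ — for $\eta\in\{-1,\varepsilon_2\}$.

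The bulk of the work is these local symbols, and I would lean on the product formula $\prod_{v}(\eta,d)_v=1$ to shorten it. Over $\Q_p$ (for $p\mid d$ odd that splits in $k_1$, i.e.\ $p\equiv\pm1\pmod8$) the symbol at a prime above $p$ is a Legendre symbol: $(-1,d)_v=\left(\tfrac{-1}{p}\right)$ and $(\varepsilon_2,d)_v=\left(\tfrac{1\pm\sqrt2}{p}\right)$; over the unramified quadratic $\Q_{p^2}$ (for $p$ inert, $p\equiv\pm3\pmod8$) one uses that a unit $u$ is a square in $\mathbb F_{p^2}$ iff $\left(\tfrac{N_{\mathbb F_{p^2}/\mathbb F_p}(u)}{p}\right)=1$, which gives $(-1,d)_v=1$ and $(\varepsilon_2,d)_v=\left(\tfrac{-1}{p}\right)$ since $N_{\mathbb F_{p^2}/\mathbb F_p}(\varepsilon_2)\equiv-1$. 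Collecting contributions and applying the product formula, one finds $(-1,d)_{\mathfrak p_2}=1$ always while $(-1,d)_v=-1$ above an odd $p\mid d$ exactly when $p\equiv7\pmod8$; and $(\varepsilon_2,d)_{\mathfrak p_2}=(-1)^{m}$ where $m$ is the number of prime factors of $d$ congruent to $3\pmod4$, with $(\varepsilon_2,d)$ trivial above $p\equiv5\pmod8$, equal to $-1$ above (inert) $p\equiv3\pmod8$, of opposite sign at the two primes above $p\equiv7\pmod8$, and equal to $\left(\tfrac{1+\sqrt2}{p}\right)$ at each of the two primes above $p\equiv1\pmod8$. From these facts one assembles the two cases: if $d$ has a prime factor $\equiv3\pmod4$ then neither $\varepsilon_2$ nor $-\varepsilon_2$ is a norm, so $n\ge1$ and $r_1\le t_1-2$, and $n=1$ iff $-1$ is a norm iff $d$ has no prime factor $\equiv7\pmod8$ — this is part (1). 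If $d$ has no prime factor $\equiv3\pmod4$ then $(-1,d)_v=1$ at every ramified place, so $-1$ is a norm and $n\le1$, i.e.\ $r_1\in\{t_1-1,t_1-2\}$; and $n=0$ iff $\left(\tfrac{1+\sqrt2}{p}\right)=1$ for every prime factor $p\equiv1\pmod8$ of $d$, which by the supplementary law for the quartic character of $2$ (namely $\left(\tfrac{1+\sqrt2}{p}\right)\equiv(-1)^{(p-1)/8}2^{(p-1)/4}\pmod p$) is precisely $2^{(p-1)/4}\equiv(-1)^{(p-1)/8}\pmod p$ — this is part (2).

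The main obstacle is the local analysis at the prime $\mathfrak p_2$ above $2$: the relevant completion of $k_1$ is the ramified quadratic extension $\Q_2(\sqrt2)$ of $\Q_2$, where the $2$-adic Hilbert symbol is genuinely delicate, and one also has to decide when $\mathfrak p_2$ is ramified in $K_1/k_1$ at all. Reading $(\eta,d)_{\mathfrak p_2}$ off the odd places via the product formula sidesteps most of this. The remaining technical point is the passage from $\left(\tfrac{1+\sqrt2}{p}\right)=1$ to the explicit congruence modulo $p$ for $p\equiv1\pmod8$, which is exactly the classical quartic-residue criterion for $2$.
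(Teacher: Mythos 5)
The paper quotes this theorem without proof (it is attributed to Azizi--Mouhib and to \cite[Theorem 2.7]{mizu_thesis}), so there is no in-paper argument to compare against; your reconstruction is correct and is essentially the standard genus-theoretic argument underlying those references. The key steps all check out: since $h(\Q(\sqrt{2}))=1$ the genus formula gives $r_1=t_1-1-n$ with $2^{n}$ the unit index, the Hasse norm theorem reduces the computation of $n$ to local conditions at the ramified places only, and the Hilbert-symbol values you list (trivial for $-1$ except above $p\equiv 7 \pmod 8$; for $\eps_2=1+\sqrt{2}$ governed by $\left(\frac{-1}{p}\right)$ at inert places, of opposite signs at the two places above a split $p\equiv 7\pmod 8$, and given by the quartic character of $2$ above $p\equiv 1\pmod 8$) assemble exactly into the two stated cases.
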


Also for bi-quadratic fields, Kuroda \cite{kuroda}, and then Kubota \cite{kubota} found an explicit formula for the order of the $2$-class group in terms of the orders of $2$-class groups of the quadratic subfields. This formula is known as {\it ``Kuroda's class number formula"}. We recall it in the following theorem. 
\begin{theorem}(\cite{kubota}, \cite{kuroda}, cf. \cite[Proposition 1]{mizu_paper})\label{kubota}
Let $K/\mathbb{Q}$ be a totally real bi-quadratic bi-cyclic extension, with unit group $E(K)$. Let $F_1, F_2$ and $F_3$ be the quadratic subfields of $K$. Let $\eps_i$ be the fundamental unit of $F_i$, for $i=1,2$ and $3$. Let $Q(K) := [E(K) : \langle -1, \eps_{1}, \eps_{2}, \eps_{3} \rangle]$ be the Hasse unit index of $K$. Then we have
\begin{equation}
\#A(K) = \dfrac{1}{4}\cdot Q(K)\cdot \#A(F_1)\cdot \#A(F_2)\cdot \#A(F_3).
\end{equation}
Then a system $\mathcal{F}$ of fundamental units of $K$ is one of the following possibilities.
\begin{enumerate}
\item If $N_{F_{1}/\mathbb{Q}}(\eps_1) = 1$, then $\mathcal{F} = \{\eps_1, \eps_2, \eps_3\}$ or $\mathcal{F} = \{\sqrt{\eps_1}, \eps_2, \eps_3\}$.
  
\item If $N_{F_{1}/\mathbb{Q}}(\eps_1) = N_{F_{2}/\mathbb{Q}}(\eps_2) = 1$, then $\mathcal{F} = \{\sqrt{\eps_1}, \sqrt{\eps_2}, \eps_3\}$ or $\mathcal{F} = \{\sqrt{\eps_1 \eps_2}, \eps_2, \eps_3\}$.
   
\item If $N_{F_{1}/\mathbb{Q}}(\eps_1) = N_{F_{2}/\mathbb{Q}}(\eps_2) = N_{F_{3}/\mathbb{Q}}(\eps_3) = 1$, then $\mathcal{F} = \{\sqrt{\eps_1\eps_2}, \eps_2, \sqrt{\eps_3}\}$ or\\ $\mathcal{F} = \{\sqrt{\eps_1 \eps_2},\sqrt{\eps_2 \eps_3}, \sqrt{\eps_3 \eps_3}\}$.

\item If $N_{F_{1}/\mathbb{Q}}(\eps_1) = N_{F_{2}/\mathbb{Q}}(\eps_2) = N_{F_{3}/\mathbb{Q}}(\eps_3) = \pm 1$, then $\mathcal{F} = \{\sqrt{\eps_1 \eps_2 \eps_3}, \eps_2, \eps_3\}$.  \end{enumerate}
\end{theorem}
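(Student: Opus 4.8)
The identity is classical (due to Kuroda), and the plan is to derive it from the analytic class number formula together with the conductor--discriminant formula, and then to pin down the fundamental-unit system separately. First I would record the character-theoretic factorization of the Dedekind zeta functions: writing $\mathrm{Gal}(K/\Q) = \{1,\sigma_1,\sigma_2,\sigma_3\}$ with $\sigma_i$ fixing $F_i$, and letting $\chi_i$ be the quadratic Dirichlet character cutting out $F_i$, Artin formalism for the regular representation of $\mathrm{Gal}(K/\Q)\cong V_4$ gives $\zeta_K(s) = \zeta_\Q(s)\prod_{i=1}^{3}L(s,\chi_i)$ while $\zeta_{F_i}(s) = \zeta_\Q(s)L(s,\chi_i)$, so that $\zeta_K(s)\,\zeta_\Q(s)^2 = \zeta_{F_1}(s)\zeta_{F_2}(s)\zeta_{F_3}(s)$.

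Next I would compare residues at $s=1$ using $\mathrm{Res}_{s=1}\zeta_F(s) = 2^{r_1}(2\pi)^{r_2}h_F R_F/(w_F\sqrt{|d_F|})$ for each $F\in\{\Q,F_1,F_2,F_3,K\}$ — all totally real, hence $r_2 = 0$ and $w_F = 2$ — and insert the conductor--discriminant relation $|d_K| = |d_{F_1}|\,|d_{F_2}|\,|d_{F_3}|$ valid for a $V_4$-extension; the discriminants and all spurious powers of $2$ then cancel, leaving $h_K R_K = h_{F_1}h_{F_2}h_{F_3}R_{F_1}R_{F_2}R_{F_3}$. It remains to express $R_K$ through the $R_{F_i} = \log\eps_i$. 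Since $\eps_i$ is fixed by $\sigma_i$, its logarithmic embedding vector in $K$ is the \emph{doubling} of its vector in $F_i$, and a $3\times3$ minor of the matrix built from the three vectors of $\eps_1,\eps_2,\eps_3$ has determinant $\pm4\log\eps_1\log\eps_2\log\eps_3$; hence the sublattice $\langle\eps_1,\eps_2,\eps_3\rangle$ of $E(K)$ has regulator $4R_{F_1}R_{F_2}R_{F_3}$, and since it has index $Q(K)$ in $E(K)/\{\pm1\}$ we get $R_K = \tfrac{4}{Q(K)}R_{F_1}R_{F_2}R_{F_3}$. Substituting gives $h_K = \tfrac14 Q(K)\,h_{F_1}h_{F_2}h_{F_3}$, and taking $2$-parts (using that $Q(K)$ is a power of $2$) yields the asserted formula for $\#A(K)$.

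The list of possible fundamental-unit systems $\mathcal{F}$ calls for a separate, more hands-on argument. I would study the elementary abelian $2$-group $E(K)/\langle-1,\eps_1,\eps_2,\eps_3\rangle$: every unit of $K$ has its square in this subgroup, so one must decide precisely which of the square roots $\sqrt{\pm\eps_{i_1}\cdots\eps_{i_k}}$ actually lie in $K$. Whether such a square root lies in $K$ is governed by the values $N_{F_i/\Q}(\eps_i)\in\{1,-1\}$ together with sign conditions at the archimedean places (for instance $\sqrt{\eps_i}\in K$ forces $N_{F_i/\Q}(\eps_i) = 1$); running through the cases according to how many of $N_{F_1/\Q}(\eps_1), N_{F_2/\Q}(\eps_2), N_{F_3/\Q}(\eps_3)$ are $+1$ produces the four displayed possibilities for $\mathcal{F}$ and simultaneously fixes $Q(K)$ in each case. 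I expect the main obstacle to be this power-of-$2$ bookkeeping — getting the constant $\tfrac14$ and the index $Q(K)$ right in the regulator comparison, and carrying out the finite but delicate determination of which products of the $\eps_i$ have a square root in $K$; the zeta-function manipulation itself is formal.
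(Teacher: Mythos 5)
The paper does not actually prove this statement: it is quoted as a known result of Kuroda and Kubota (via Mizusawa's Proposition~1), so there is no internal proof to compare against. Judged on its own terms, your reconstruction of the class number formula is the standard and correct one. The factorization $\zeta_K\zeta_{\Q}^2=\zeta_{F_1}\zeta_{F_2}\zeta_{F_3}$, the conductor--discriminant identity $|d_K|=|d_{F_1}|\,|d_{F_2}|\,|d_{F_3}|$, and the residue comparison do reduce everything to $h_KR_K=h_{F_1}h_{F_2}h_{F_3}R_{F_1}R_{F_2}R_{F_3}$, and your regulator computation checks out: indexing the real embeddings of $K$ by $\mathrm{Gal}(K/\Q)$, the logarithmic vectors of $\eps_1,\eps_2,\eps_3$ are $\ell_1(1,1,-1,-1)$, $\ell_2(1,-1,1,-1)$, $\ell_3(1,-1,-1,1)$ with $\ell_i=\log|\eps_i|$, and deleting one column gives a $3\times3$ determinant $\pm4\ell_1\ell_2\ell_3$, whence $R_K=4R_{F_1}R_{F_2}R_{F_3}/Q(K)$ and $h_K=\tfrac14 Q(K)h_{F_1}h_{F_2}h_{F_3}$. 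Since $u^2=\pm\prod_i N_{K/F_i}(u)\in\langle-1,\eps_1,\eps_2,\eps_3\rangle$ for every $u\in E(K)$, the index $Q(K)$ divides $8$, so passing to $2$-parts is legitimate.

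The genuine gap is in the second half. The list of admissible systems $\mathcal{F}$ \emph{is} the content of Kubota's theorem, and ``running through the cases according to how many norms are $+1$'' is where all the work lives: for each pattern of norms one must determine exactly which products $\eps_{i_1}\cdots\eps_{i_k}$ can acquire a square root in $K$ (your observation that $\sqrt{\eps_i}\in K$ forces $N_{F_i/\Q}(\eps_i)=1$, because a square in a totally real field is totally positive, is one such constraint, but one also needs the converse-direction exclusions coming from Galois and norm compatibility between the three quadratic subfields) and verify that only the displayed combinations survive. Your proposal names the right invariants but does not carry out this finite case analysis, so as written it establishes the formula for $\#A(K)$ while only asserting the classification of $\mathcal{F}$. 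Relative to the paper, which imports both halves from the literature, this is not a defect; but be aware that the unit classification is not a routine corollary of the norm conditions --- it is the substance of Kubota's argument. (A careful case analysis would also surface the typos in the statement as printed: $\sqrt{\eps_3\eps_3}$ in case (3) and the vacuous condition ``$=\pm1$'' in case (4).)
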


The following theorem by Fukuda \cite{fukuda} provides certain conditions for the stability of rank and order of $A(K_n)$ for all integer $n \geq 1$.

\begin{theorem}\cite[Theorem 1]{fukuda}\label{fukuda's result}
Let $p$ be a prime number. Let $k$ be a number field and let $k_{\infty}/k$ be a $\Z_p$-extension of $k$. Let $n_0 \geq 0$ be an integer such that any prime of $k_{\infty}$ which is ramified in $k_{\infty}/k$ is totally ramified in $k_{\infty}/k_{n_0}$. Denote the $n^{\rm{th}}$ layer of $k_{\infty}/k$ by $k_{n}$ and the $p$-Sylow subgroup of $Cl_{k_{n}}$ by $A(k_{n})$. Then the following hold.
\begin{enumerate}
\item If there exists an integer $n \geq n_0$ such that $\#A(k_{n+1}) = \#A(k_n)$, then $\#A(k_m) = \#A(k_n)$ for all $m \geq n$. In particular, both the Iwasawa $\mu$ and $\lambda$-invariants vanish. 
  
  \smallskip
  
\item If there exists an integer $n \geq n_0$ such that ${\rm{rk}}_{p}(A(K_{n+1})) = {\rm{rk}}_{p}(A(K_{n}))$, then ${\rm{rk}}_{p}(A(K_{m})) = {\rm{rk}}_{p}(A(K_{n}))$ for all $m \geq n$. In particular, the Iwasawa $\mu$-invariant vanishes.
\end{enumerate}
\end{theorem}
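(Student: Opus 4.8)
The plan is to deduce both parts from two short Nakayama arguments on the Iwasawa module. Write $\Lambda = \Z_p\llbracket T\rrbracket$ with $T = \gamma_0 - 1$ for a fixed topological generator $\gamma_0$ of $\Gamma = \mathrm{Gal}(k_\infty/k)$, set $\omega_n = (1+T)^{p^n}-1$ and $\nu_{n+1,n} = \omega_{n+1}/\omega_n = \sum_{i=0}^{p-1}(1+T)^{ip^n}\in\Lambda$, and let $X = \varprojlim_n A(k_n)$ be the Iwasawa module, a finitely generated torsion $\Lambda$-module. Since every prime ramified in $k_\infty/k$ is totally ramified in $k_\infty/k_{n_0}$, the standard theory of $\Z_p$-extensions (cf.\ \cite[Ch.~13]{washington_book}) furnishes a single finitely generated $\Lambda$-submodule $Y_{n_0}\subseteq X$ together with isomorphisms $A(k_n)\cong X/Y_n$ for all $n\ge n_0$, where $Y_n = \nu_{n,n_0}Y_{n_0}$; in particular $Y_{n+1} = \nu_{n+1,n}Y_n\subseteq Y_n$ for $n\ge n_0$. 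Two elementary properties of $\nu_{n+1,n}$ drive everything: its constant term is $p$, so $\nu_{n+1,n}\in\mathfrak m := (p,T)$, the maximal ideal of $\Lambda$; and, using $(1+T)^{p^n}\equiv 1+T^{p^n}$ in $\mathbb F_p\llbracket T\rrbracket$, we have $\nu_{n+1,n}\equiv T^{p^n(p-1)}\pmod p$, which is a non-unit in $\overline\Lambda := \Lambda/p\Lambda\cong\mathbb F_p\llbracket T\rrbracket$.

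For part (1), suppose $\#A(k_{n+1}) = \#A(k_n)$ for some $n\ge n_0$. As $A(k_j)\cong X/Y_j$ is finite and $Y_{n+1}\subseteq Y_n$, equality of orders forces $Y_{n+1} = Y_n$, that is, $Y_n = \nu_{n+1,n}Y_n\subseteq\mathfrak m Y_n\subseteq Y_n$, whence $\mathfrak m Y_n = Y_n$. Since $Y_n$ is finitely generated over the Noetherian local ring $\Lambda$, Nakayama's lemma gives $Y_n = 0$. Then $Y_m = \nu_{m,n}Y_n = 0$ for every $m\ge n$, so $A(k_m)\cong X$ for all $m\ge n$; in particular $X$ is finite and $\#A(k_m) = \#A(k_n)$ for all $m\ge n$. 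Comparing with Iwasawa's formula $\#A(k_m) = p^{\mu p^m+\lambda m+\nu}$ valid for $m\gg 0$, an eventually constant sequence is possible only if $\mu = \lambda = 0$.

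Part (2) repeats the argument modulo $p$. Put $\overline X = X/pX$, a finitely generated $\overline\Lambda$-module, and let $\overline Y_j\subseteq\overline X$ be the image of $Y_j$. Applying $-\otimes_{\Z_p}\mathbb F_p$ to $A(k_j)\cong X/Y_j$ yields $A(k_j)/pA(k_j)\cong\overline X/\overline Y_j$, so $\mathrm{rk}_p A(k_j) = \dim_{\mathbb F_p}(\overline X/\overline Y_j)$, and $\overline Y_{j+1} = T^{p^j(p-1)}\overline Y_j\subseteq (T)\overline Y_j\subseteq\overline Y_j$. If $\mathrm{rk}_p A(k_{n+1}) = \mathrm{rk}_p A(k_n)$ for some $n\ge n_0$, then $\overline X/\overline Y_{n+1}$ and $\overline X/\overline Y_n$ are finite-dimensional and of equal dimension with $\overline Y_{n+1}\subseteq\overline Y_n$, forcing $\overline Y_{n+1} = \overline Y_n$; hence $(T)\overline Y_n = \overline Y_n$, and Nakayama over the Noetherian local ring $\overline\Lambda$ gives $\overline Y_n = 0$. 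Therefore $\overline Y_m = 0$ and $\mathrm{rk}_p A(k_m) = \dim_{\mathbb F_p}\overline X$ for all $m\ge n$; finiteness of $A(k_m)$ forces $\overline X = X/pX$ to be finite, which for a finitely generated $\Lambda$-module is equivalent to $\mu = 0$, and the $p$-ranks stabilize from level $n$ on.

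The only substantial ingredient is the module-theoretic dictionary recalled in the first paragraph, namely the identification $A(k_n)\cong X/\nu_{n,n_0}Y_{n_0}$ with one fixed submodule $Y_{n_0}$; this is exactly where the total-ramification hypothesis is used. Concretely, letting $M_n$ (resp.\ $M_\infty$) be the maximal unramified abelian $p$-extension of $k_n$ (resp.\ $k_\infty$), total ramification makes $k_\infty/k_n$ ramified, hence $M_n\cap k_\infty = k_n$, which identifies $A(k_n)$ with $X/Y_n$ for $Y_n = \mathrm{Gal}(M_\infty/M_nk_\infty)$ and allows one to compute $Y_n = \nu_{n,n_0}Y_{n_0}$. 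Granting that input, the proof is just the two Nakayama arguments above, with no delicate estimate needed.
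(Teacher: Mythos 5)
Your proposal is correct. Note that the paper itself offers no proof of this statement: it is quoted verbatim from Fukuda and used as a black box, so there is no in-text argument to compare against. Your two Nakayama arguments on $Y_n=\nu_{n,n_0}Y_{n_0}$ (over $\Lambda$ with $\nu_{n+1,n}\in(p,T)$ for the orders, and over $\Lambda/p\Lambda$ with $\nu_{n+1,n}\equiv T^{p^n(p-1)}$ for the $p$-ranks) are essentially Fukuda's original proof, resting on the standard identification $A(k_n)\cong X/\nu_{n,n_0}Y_{n_0}$ from the theory of $\Z_p$-extensions, which is exactly where the total-ramification hypothesis enters.
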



The next proposition asserts the existence of infinitely many rational primes in prescribed arithmetic progressions satisfying any given set of conditions on the Legendre symbols. More precisely, the proposition is as follows.
\begin{propn}\label{infinite primes}
Let $t \geq 1$ be an integer. Assume that for each $i \in \{1,\ldots,t\}$, we are given integers $a_i \in \{1,3,5,7 \}$, and for each $1 \leq j < k \leq t$, the integers $\eps_{kj} \in \{ \pm 1 \}$ are specified. Then there exist infinitely many $t$-tuples $\{ p_1,\ldots,p_t \}$ of prime numbers such that $p_i \equiv a_i \pmod {8}$ and the Legendre symbol $\left( \dfrac{p_k}{p_j} \right)$ equals $\eps_{kj}$.
\end{propn}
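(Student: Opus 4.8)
The plan is to deduce Proposition~\ref{infinite primes} from Chebotarev's density theorem (or, more elementarily, from a combination of Dirichlet's theorem on primes in arithmetic progressions and quadratic reciprocity). The key observation is that each of the conditions we wish to impose—the residue $p_i \equiv a_i \pmod 8$ and the value of the Legendre symbol $\left(\frac{p_k}{p_j}\right)$—can be read off from the Frobenius conjugacy class (equivalently, the splitting behaviour) of $p_i$ in a suitable fixed abelian extension of $\Q$. Concretely, I would build the primes $p_1,\dots,p_t$ one at a time, in increasing order of index, so that when I choose $p_k$ all the smaller primes $p_1,\dots,p_{k-1}$ are already fixed.

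For the inductive step, suppose $p_1,\dots,p_{k-1}$ have been selected. I want to choose $p_k$ so that $p_k \equiv a_k \pmod 8$ and, for each $j < k$, $\left(\frac{p_k}{p_j}\right) = \eps_{kj}$. By quadratic reciprocity, $\left(\frac{p_k}{p_j}\right)$ is determined by $\left(\frac{p_j}{p_k}\right)$ together with the residues of $p_j$ and $p_k$ modulo $4$ (the sign flip occurs exactly when $p_j \equiv p_k \equiv 3 \pmod 4$); since $p_j$ is already fixed and the residue of $p_k \pmod 8$—hence $\pmod 4$—is being prescribed by the condition $p_k \equiv a_k \pmod 8$, the condition $\left(\frac{p_k}{p_j}\right) = \eps_{kj}$ is equivalent to a condition of the form $\left(\frac{p_j}{p_k}\right) = \eps'_{kj}$ for an explicit $\eps'_{kj} \in \{\pm1\}$. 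Now $\left(\frac{p_j}{p_k}\right) = \left(\frac{p_j^\ast}{p_k}\right)$ up to the same sign bookkeeping, where $p_j^\ast = (-1)^{(p_j-1)/2}p_j \equiv 1 \pmod 4$, and the Kronecker symbol $\left(\frac{p_j^\ast}{\cdot}\right)$ is precisely the quadratic character of the field $\Q(\sqrt{p_j^\ast})$, which has conductor $p_j$. So each of our conditions on $p_k$ amounts to prescribing the image of $\mathrm{Frob}_{p_k}$ under a fixed character, either the character mod $8$ giving $p_k \pmod 8$, or one of the characters $\chi_{p_j^\ast}$ for $j<k$. These characters are all distinct and are linearly independent (they have pairwise coprime or otherwise incompatible conductors $8, p_1, \dots, p_{k-1}$), so the intersection of the prescribed fibres is a non-empty union of residue classes modulo $M := 8 p_1 \cdots p_{k-1}$, and in fact a single coset-like condition cutting out a subset of $(\Z/M\Z)^\times$ of index $2^{k}$; crucially this subset is non-empty. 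By Dirichlet's theorem there are infinitely many primes $p_k$ in any one of these residue classes modulo $M$; picking one of them that is also distinct from $p_1,\dots,p_{k-1}$ completes the induction, and since Dirichlet supplies infinitely many choices at each stage, there are in fact infinitely many $t$-tuples.

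I would write this up by first stating cleanly the reduction: for a fixed prime $\ell$ with $\ell \equiv 1 \pmod 4$ the map $p \mapsto \left(\frac{\ell}{p}\right) = \left(\frac{p}{\ell}\right)$ depends only on $p \bmod \ell$, and for $\ell \equiv 3 \pmod 4$ the map $p \mapsto \left(\frac{\ell}{p}\right)$ depends only on $p \bmod 4\ell$; in all cases the dependence is through a non-trivial character. Then I would note that imposing $p_k \bmod 8$ together with $k-1$ further such character conditions, with moduli $8, 4p_1, \dots, 4p_{k-1}$ (or their odd parts), amounts by the Chinese Remainder Theorem to restricting $p_k$ to a non-empty set of residue classes modulo the lcm of these moduli—non-emptiness holds because the relevant characters are independent on $(\Z/\mathrm{lcm}\Z)^\times$—and finally invoke Dirichlet. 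The main (and really only) technical point to handle carefully is the \emph{non-emptiness / independence} claim: one must make sure that the quadratic reciprocity sign flips have been accounted for correctly so that the simultaneous conditions are genuinely consistent, and that the characters $\chi_8$ (or $\chi_{-4}, \chi_8$) and $\chi_{p_1^\ast},\dots,\chi_{p_{k-1}^\ast}$ are multiplicatively independent—this is immediate from distinctness of their prime conductors, but it is the step that makes the argument work, so I would state it as a short lemma about characters of $(\Z/M\Z)^\times$ rather than leave it implicit. I expect no serious obstacle here; the result is essentially folklore and the entire proof is a careful but routine application of reciprocity plus Dirichlet (equivalently, one clean application of Chebotarev to the compositum $\Q(\zeta_8, \sqrt{p_1},\dots,\sqrt{p_{k-1}})$).
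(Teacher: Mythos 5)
Your proposal is correct, and its skeleton — construct the primes one at a time, translate all the constraints on the new prime $p_k$ into congruence conditions modulo $8p_1\cdots p_{k-1}$, check the conditions are simultaneously satisfiable, and invoke Dirichlet — is exactly the skeleton of the paper's proof (which is phrased as an induction on $t$). The difference is that you route the key step through quadratic reciprocity and an independence-of-characters lemma, and this is heavier than necessary: the symbol you need to control is $\left( \dfrac{p_k}{p_j} \right)$ with the \emph{new} prime in the numerator and an already-fixed prime $p_j$ in the denominator, and such a Legendre symbol is, by definition, a function of $p_k \bmod p_j$ alone. The paper therefore simply picks a residue $v_j$ modulo $p_j$ with $\left( \dfrac{v_j}{p_j} \right) = \eps_{kj}$ (possible since half the classes in $(\Z/p_j\Z)^{\times}$ are squares and half are not), imposes $p_k \equiv v_j \pmod {p_j}$ together with $p_k \equiv a_k \pmod 8$, and applies the Chinese Remainder Theorem and Dirichlet; no reciprocity, no character theory, and the non-emptiness you flag as ``the main technical point'' is automatic from CRT with pairwise coprime moduli. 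Your version does work — the reciprocity bookkeeping converts the condition into a character condition of conductor dividing $4p_j$, and the conductors are coprime so the conditions are consistent — but the extra lemma you propose to isolate is solving a problem that the direct formulation never creates. (A minor quibble: the index of the admissible set in $(\Z/M\Z)^{\times}$ is $2^{k+1}$ rather than $2^{k}$, since fixing $p_k \bmod 8$ already has index $4$; this does not affect anything.)
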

\begin{proof}
We prove this by induction on $t$. For $t = 2$, we assume that $p_1 \equiv a_1 \pmod 8$ is given. We wish to find $p_2 \equiv a_2 \pmod 8$ such that $\left( \dfrac{p_2}{p_1} \right) = \eps_{21}$. Let $1 \leq v \leq p_1 -1$ be an integer such that $\left( \dfrac{v}{p_1} \right) = \eps_{21}$. Consider the system of congruences
\begin{align*}
X  &\equiv a_2 \pmod 8 \\
X  &\equiv v \pmod {p_1}.
\end{align*}
Then by the Chinese Remainder Theorem, there exists a unique solution $x_0 \pmod {8p_1}$ to this system. Therefore, $\gcd ( x_0, 8p_1 ) =1$ and consequently, by Dirichlet's theorem for primes in an arithmetic progression, there exist infinitely many primes $\ell \equiv x_0 \pmod {8p_1}$. Then $\ell \equiv x_0 \equiv a_2 \pmod 8$ and $\left( \dfrac{\ell}{p_1} \right) = \left( \dfrac{x_0}{p_1} \right) = \left( \dfrac{v}{p_1} \right) = \eps_{21}$. Thus the statement holds true for $t =2$.

\smallskip

Now, we assume that the proposition holds true for $t-1$. That is, for given integers $a_i \in \{1,3,5,7 \}$, $1 \leq i \leq t-1$ and given integers $\eps_{kj} \in \{ \pm 1 \}$ with $j < k$, there exist infinitely many $(t-1)$-tuples $\{ p_1 \cdots p_{t-1} \}$ of rational primes satisfying the hypotheses of the proposition. Now, for a given integer $a_t \in \{1,3,5,7 \}$ and given integers $\eps_{tk} \in \{ \pm 1 \}$ for $k \in \{1,\cdots ,t-1\}$, let us consider the following system of congruences.
\begin{align*}
X  &\equiv a_t \pmod {8} \\
X  &\equiv v_1 \pmod {p_1}\\
\vdots\\
X  &\equiv v_{t-1} \pmod {p_{t-1}},
\end{align*}
where $p_{1},\ldots,p_{t - 1}$ is a $(t - 1)$-tuple of prime numbers satisfying the induction hypothesis. Let $1 \leq v_j \leq p_j-1$ be such that $\left( \dfrac{v_j}{p_j} \right) = \eps_{tj}$. Again by the Chinese Remainder Theorem, this system has a unique solution $y_0 \pmod {8p_1\cdots p_{t-1}}$. Since $\gcd ( y_0, 8p_1\cdots p_{t-1}) =1$, by Dirichlet's theorem for primes in an arithmetic progression, we have infinitely many primes $\ell \equiv y_0 \pmod {8p_1\cdots p_{t-1}}$. Then $\ell \equiv y_0 \equiv a_t \pmod 8$ and  $\left( \dfrac{\ell}{p_j} \right) = \left( \dfrac{y_0}{p_j} \right) = \left( \dfrac{v_j}{p_j} \right) = \eps_{tj}$. This completes the proof of the proposition.
\end{proof}
\begin{rmk}
    In view of Proposition \ref{infinite primes}, we see that there are infinitely many tuples of prime numbers satisfying the required Legendre symbol conditions of Theorem \ref{rank stability}, Theorem \ref{A(K1) for K = p1p2q1q2)} and Theorem \ref{A(K1) for K = q1q2q3q4}.
\end{rmk}
\section{Proof of Theorem \ref{rank stability}}

Let $K = \Q(\sqrt{d})$ be a real quadratic field. First, we assume that ${\rm{rank}} \ A(K) = {\rm{rank}} \ A(K_{1}) = 2$ and ${\rm{rank}} \ A(K^{\prime}) = 3$. By Remark \ref{rmk2 to genus}, the discriminant $D_{K}$ can have either $3$ or $4$ prime factors. Among the odd prime divisors of $D_{K}$, let $p_i$ denote those rational primes that are congruent to $1 \pmod 4$ and let $q_j$ denote those that are congruent to $3 \pmod 4$. We now enlist the suitable choices of $d$ in Table 1 and Table 2. 
\begin{table}[hbt!]
\begin{center}
\caption{Possibilities of $K$ when $t=3$}
\smallskip
\label{tab:table1}
\begin{tabular}{|c| c| c| c|} 
\hline 
Ramified primes & $K = \Q(\sqrt{d})$ & $d \pmod 4$ & $D(K)$  \\
\hline
$2, p_1, p_2$ & $\Q(\sqrt{2p_1p_2})$ & $2$ & $8p_1p_2$  \\
$2, p_1, q_1$ & $\Q(\sqrt{p_1q_1})$  & $3$ & $4p_1q_1$\\
$2, p_1, q_1$ & $\Q(\sqrt{2p_1q_1})$ & $2$ & $8p_1q_1$\\
$2, q_1, q_2$ & $\Q(\sqrt{2q_1q_2})$ & $2$ & $8q_1q_2$\\
$p_1,p_2,p_3$ & $\Q(\sqrt{p_1p_2p_3})$ & $1$ & $p_1p_2p_3$\\
$p_1,q_1,q_2$ & $\Q(\sqrt{p_1q_1q_2})$ & $1$ & $p_1q_1q_2$\\
\hline
\end{tabular}
\end{center}
\end{table}

\begin{table}[hbt!]
\begin{center}
\caption{Possibilities of $K$ when $t=4$}
\smallskip
\label{tab:table2}
\begin{tabular}{|c| c| c| c|} 
\hline 
Ramified primes & $K = \Q(\sqrt{d})$ & $d \pmod 4$ & $D(K)$  \\
\hline
$2, p_1, p_2, p_3$ & $\Q(\sqrt{2p_1p_2p_3})$ & $2$ & $8p_1p_2p_3$  \\
$2, p_1, p_2, q_1$ & $\Q(\sqrt{p_1p_2q_1})$  & $3$ & $4p_1p_2q_1$\\
$2, p_1, p_2, q_1$ & $\Q(\sqrt{2p_1p_2q_1})$ & $2$ & $8p_1p_2q_1$\\
$2, p_1, q_1, q_2$ & $\Q(\sqrt{2p_1q_1q_2})$ & $2$ & $8p_1q_1q_2$\\
$2, q_1, q_2, q_3$ & $\Q(\sqrt{q_1q_2q_3})$  & $3$ & $4q_1q_2q_3$\\
$2, q_1, q_2, q_3$ & $\Q(\sqrt{2q_1q_2q_3})$ & $2$ & $8q_1q_2q_3$\\
$p_1,p_2, p_3, p_4$ & $\Q(\sqrt{p_1 p_2 p_3 p_4})$ & $1$ & $p_1 p_2 p_3 p_4$\\
$p_1,p_2, q_1,q_2$ & $\Q(\sqrt{p_1 p_2 q_1 q_2})$ & $1$ & $p_1 p_2 q_1 q_2$\\
$q_1,q_2, q_3,q_4$ & $\Q(\sqrt{q_1 q_2 q_3 q_4})$ & $1$ & $q_1 q_2 q_3 q_4$\\
\hline
\end{tabular}
\end{center}
\end{table}

We shall restrict ourselves to only those $K$ where the prime above $2$ gets totally ramified in $K_1/K$. Thus we discard the fields $K = \Q(\sqrt{2p_1p_2}), \Q(\sqrt{2q_1q_2}), \Q(\sqrt{2p_1p_2p_3}),$ and $\Q(\sqrt{2p_1q_1q_2})$. We observe that the genus field $K_G$ for $K = \Q(\sqrt{2p_1q_1})$ is $\Q(\sqrt{2p_1q_1}, \sqrt{p_1})$. Therefore, ${\rm{Gal}}(K_G/K) \cong \Z/2\Z$. Thus, ${\rm{rank}} \ (A(K))=1$, which is not of our consideration. The same happens with $\Q(\sqrt{p_1q_1})$ and $\Q(\sqrt{p_1q_1q_2})$ as well. Similarly, the genus field of $K = \Q(\sqrt{p_1p_2p_3p_4})$ is $\Q(\sqrt{p_1},\sqrt{p_2},\sqrt{p_3},\sqrt{p_4})$ and ${\rm{rank}} \ A(K) = 3$. The fields $\Q(\sqrt{q_1q_2q_3}), \Q(\sqrt{2q_1q_2q_3}), \Q(\sqrt{p_1p_2q_1})$ and $\Q(\sqrt{2p_1p_2q_1})$ have already been covered by Mizusawa in \cite{mizu_thesis} and \cite{mizu_paper}. Therefore, we are left only with the fields $\Q(\sqrt{p_1p_2q_1q_2})$, $\Q(\sqrt{q_1q_2q_3q_4})$ and $\Q(\sqrt{p_1p_2p_3})$. Now, we assume that ${\rm{rank}} \ A(K_1)=2$ to find more information about the primes $p_i$'s and $q_j$'s.

\smallskip

Let $K = \Q(\sqrt{p_1p_2q_1q_2})$. Since $d \equiv 1 \pmod 4$, the place above $2$ is unramified in $K$ but it is ramified in $\Q(\sqrt{2})$. Since ramification index is multiplicative in a tower of number fields, we conclude that the place above the rational prime $2$ in $\Q(\sqrt{2})$ must be unramified in $K_1$. We now appeal to the Case $1$ of Theorem \ref{rank of biq}, and taking $r_1= {\rm{rank}} \ A(K_1) =2$, we consider the following two cases.


\smallskip

\textbf{Case 1.} $r_1 = t_1 -2$. In this case, $d$ must not have a prime factor congruent to $7 \pmod 8$. This gives us that $q_1 \equiv q_2 \equiv 3 \pmod 8$. Also, $t_1 = 4$ implies that exactly $4$ places of $\Q(\sqrt{2})$ must be ramified in $K_1$. Since $q_1 \equiv q_2 \equiv 3 \pmod 8$, the primes above $q_1$ and $q_2$ must be inert in $\Q(\sqrt{2})$. Thus there is exactly one prime in $\Q(\sqrt{2})$ lying above $q_j \ (j=1,2)$ which must be ramified in $K_1$. Now if for some $i$, $p_i \equiv 1 \pmod 8$, then $p_i$ must totally split in $\Q(\sqrt{2})$. This contributes two places above $p_i$, making the total number of places in $\Q(\sqrt{2})$ ramified in $K_1$ at least 5, which is not possible. Hence $p_1 \equiv p_2 \equiv 5 \pmod 8$. Combining all these, we get $p_1 \equiv p_2 \equiv 5 \pmod 8, q_1 \equiv q_2 \equiv 3 \pmod 8$.

\smallskip

\textbf{Case 2.} $r_1 = t_1 -3$. In this case, $t_1 = 5$ and hence at least one of the $q_j$'s must be congruent to $7 \pmod 8$. If $q_j \equiv 7 \pmod 8$, then it must be totally split in $\Q(\sqrt{2})$, yielding two places above $q_j$. Each $p_i$ contributes at least one place above itself that is ramified in $K_1$. Since the total number of ramified primes is exactly five, we conclude that $p_1 \equiv p_2\equiv 5 \pmod 8, q_1 \equiv 7 \pmod 8, q_2 \equiv 3 \pmod 8$.

\smallskip

On the similar lines, we find out that for $K = \Q(\sqrt{q_1q_2q_3q_4})$, ${\rm{rank}} \ A(K) = 2$, ${\rm{rank}} \ A(K^{\prime}) = 3$ and for ${\rm{rank}} \ A(K_1) = 3$ to hold, we need to have either $q_1 \equiv q_2 \equiv q_3 \equiv q_4 \equiv 3 \pmod {8}$ or $q_1 \equiv 7 \pmod {8}$, $q_2 \equiv q_3 \equiv q_4 \equiv 3 \pmod {8}$.

\smallskip

For the field $K = \Q(\sqrt{p_1p_2p_3})$, we follow a similar line of argument to obtain $p_1 \equiv p_2 \equiv p_3 \equiv 5 \pmod 8$ or $p_1 \equiv 1 \pmod 8$, $p_2 \equiv p_3 \equiv 5 \pmod 8$. 

\smallskip

We prove the converse part only for $p_i \equiv 5 \pmod {8}$ and $q_j \equiv 3 \pmod {8}$ as the remaining cases follow similarly. In this case, we observe that the narrow genus field is given by $$K^{+}_G = \Q( \sqrt{p_1p_2q_1q_2},\sqrt{p_1}, \sqrt{p_2}, \sqrt{-q_1}, \sqrt{-q_2}) = \Q(\sqrt{p_1}, \sqrt{p_2}, \sqrt{-q_1}, \sqrt{-q_2}).$$ Hence the genus field $K_G = \Q(\sqrt{p_1}, \sqrt{p_2},\sqrt{q_1q_2})$. Then we have ${\rm{Gal}}(K_G/K) \simeq \Z/2\Z \oplus \Z/2\Z$ and hence ${\rm{rank}} \ A(K)=2$. Likewise, we find that the genus field $K^{\prime}_{G}$ of $K'$ is given by $\Q(\sqrt{2},\sqrt{p_1}, \sqrt{p_2},\sqrt{q_1q_2})$ and ${\rm{rank}}\ A(K') =3$. Again, by using Theorem \ref{rank of biq}, we conclude that ${\rm{rank}} \ A(K_{1}) = 2$. This completes the proof of Theorem \ref{rank stability}. $\hfill\Box$

\section{Proof of Corollary \ref{cor to rank stability}}
For each of the fields mentioned in Theorem \ref{rank stability}, the prime above the rational prime $2$ is unramified in $K$. Hence, the prime above $2\mathcal{O}_K$ is ramified in the extension $K_1/K$. Since $\left[K_1 : K\right] = 2$, the prime above $2\mathcal{O}_K$ is totally ramified. Let $K_n = K\Q_{n}$ be the $n^{\rm{th}}$ layer in the cyclotomic $\Z_2$-extension of $K$. In the tower $\Q_{0} = \Q \subseteq \Q_1 \subseteq \cdots \Q_n \subseteq K_n$, we see that the prime above $2$ is ramified with ramification degree 2 in each extension $\Q_{i}/\Q_{i-1}$ for $i=1,\cdots, n$ and it is unramified in $K_n/\Q_n$. This proves that $2$ is totally ramified in the extension $K_n/K$ for any $n \in \mathbb{N}$. Since $2$ is the only prime that is ramified in the cyclotomic $\Z_2$-extension $K_{\infty}/K$ and it is totally ramified in each extension $K_n/K$, it is totally ramified in $K_{\infty}/K$. 

Using Theorem \ref{rank stability} and Theorem \ref{fukuda's result}, we conclude that ${\rm{rank}} \ A(K) = {\rm{rank}} \  A(K_1) =2$ entails ${\rm{rank}} \ (A_n) = 2$ for all $n \geq 0$. This completes the proof of the Corollary. $\hfill\Box$

\section{Proof of Theorem \ref{A(K1) for K = p1p2q1q2)}}
 
Let us consider $K = \Q(\sqrt{d})= \Q(\sqrt{p_1p_2q_1q_2})$ with $p_1 \equiv p_2 \equiv 5 \pmod 8$, $q_1 \equiv 7 \pmod 8$ and $q_2 \equiv 3 \pmod 8$. Since $d$ has prime factors that are congruent to $3 \pmod 4$, by using Proposition \ref{A(K) and A+(K) are 2 elmtry}, we have $A(K)$ is $2$-elementary if and only if $A^{+}(K)$ is $2$-elementary. Also, we have $S_1(K) = \{ (1, p_1p_2q_1q_2) , \  (p_1, p_2q_1q_2),  \ (p_2, p_1q_1q_2),  \ (-q_1, -p_1p_2q_2), \ (-q_2, -p_1p_2q_1), \ (p_1p_2, q_1q_2),  \\ (-p_1q_1, -p_2q_2),  \ (-p_2q_1, -p_1q_2) \}$.

\smallskip

We note that the order of the appearance of the terms in any of the above pairs can be rearranged depending on which one is bigger in terms of the absolute value. We consider a table (cf. Table $3$) where we list all possible Kronecker symbols corresponding to each pair in order to appeal to Theorem \ref{RR}.  

\begin{table}[hbt!]
\begin{center}
\caption{Kronecker symbols corresponding to each element in $S_1(K)$}
\smallskip
\label{tab:table3}
\begin{tabular}{|c| c| c| } 
\hline 
Sr. No. & Tuple & Kronecker Symbols  \\
\hline
$1$ & $(p_1, p_2q_1q_2)$ & $ \left(\dfrac{p_1}{p_2}\right), \left(\dfrac{p_1}{q_1}\right),\left(\dfrac{p_1}{q_2}\right),\left(\dfrac{p_2q_1q_2}{p_1}\right)$  \\
$2$ & $(p_2, p_1q_1q_2)$  & $\left(\dfrac{p_2}{p_1}\right),\left(\dfrac{p_2}{q_1}\right),\left(\dfrac{p_2}{q_2}\right),\left(\dfrac{p_1q_1q_2}{p_2}\right)$ \\
$3$ & $(-q_1, -p_1p_2q_2)$ & $\left(\dfrac{-q_1}{p_1}\right),\left(\dfrac{-q_1}{p_2}\right),\left(\dfrac{-q_1}{q_2}\right),\left(\dfrac{-p_1p_2q_2}{q_1}\right)$ \\
$4$ & $(-q_2, -p_1p_2q_1)$ & $\left(\dfrac{-q_2}{p_1}\right),\left(\dfrac{-q_2}{p_2}\right),\left(\dfrac{-q_2}{q_1}\right),\left(\dfrac{-p_1p_2q_1}{p_2}\right)$ \\
$5$ & $(p_1p_2, q_1q_2)$ & $\left(\dfrac{p_1p_2}{q_1}\right),\left(\dfrac{p_1p_2}{q_2}\right),\left(\dfrac{q_1q_2}{p_1}\right),\left(\dfrac{q_1q_2}{p_2}\right)$ \\
$6$ & $(-p_1q_1, -p_2q_2)$ & $ \left(\dfrac{-p_1q_1}{p_2}\right),\left(\dfrac{-p_1q_1}{q_2}\right),\left(\dfrac{-p_2q_2}{p_1}\right),\left(\dfrac{-p_2q_2}{q_1}\right)$ \\
$7$ & $(-p_1q_2, -p_2q_1)$ & $ \left(\dfrac{-p_1q_2}{p_2}\right),\left(\dfrac{-p_1q_2}{q_1}\right),\left(\dfrac{-p_2q_1}{p_1}\right),\left(\dfrac{-p_2q_1}{q_2}\right)$ \\
\hline
\end{tabular}
\end{center}
\end{table}

For $\#S_2(K) = 1$, we require at least one entry in each row of Table $3$ to be equal to $-1$. By considering the combinations of the Legendre symbols $\left( \dfrac{p_i}{q_j} \right)$, we find that the any of the following conditions are necessary and sufficient for $\#S_2(K) = 1$
\begin{enumerate}
\item $\left( \dfrac{p_1}{p_2} \right) = -1$ and $\left( \dfrac{q_1q_2}{p_1}\right) = -1$,

\smallskip

\item $\left( \dfrac{p_1}{p_2} \right) = -1$, $\left( \dfrac{q_1q_2}{p_1}\right) = 1$ and $\left( \dfrac{q_1q_2}{p_2}\right) = -1$,

\smallskip

\item $\left( \dfrac{p_1}{p_2} \right) = 1$, $\left( \dfrac{q_1q_2}{p_1}\right) = -1$ and $\left( \dfrac{q_1q_2}{p_2}\right) = -1$.
\end{enumerate}

Therefore, when any one of the above Legendre symbol conditions occurs, we have $\#S_2(K)=1$ and consequently, $A^{+}(K)$ is $2$-elementary. This implies that $\#S_1(K) = \#A^{+}(K) = 8$ and $A(K) \simeq \Z/2Z \oplus \Z/2\Z$. But we simultaneously require $A(K') \simeq \Z/2Z \oplus \Z/2\Z \oplus \Z/2\Z$. Since $D_{K'} = 8p_1p_2q_1q_2$, we see that $\#S_1(K') = 16$. Out of these Legendre symbols, we see that $\#S_2(K') = 1$ if and only if one of the following holds.
\begin{enumerate}
\item $\left( \dfrac{p_1}{p_2} \right) = -1$, $\left( \dfrac{p_1}{q_1} \right) = -1$, $\left( \dfrac{p_1}{q_2} \right) = 1$, $\left( \dfrac{q_1q_2}{p_2} \right) = 1$,
\item $\left( \dfrac{p_1}{p_2} \right) = -1$, $\left( \dfrac{q_1q_2}{p_1} \right) = 1$, $\left( \dfrac{p_2}{q_1} \right) = -1$, $\left( \dfrac{p_2}{q_2} \right) = 1$,
\item $\left( \dfrac{p_1}{p_2} \right) = 1$, $\left( \dfrac{p_1p_2}{q_1} \right) = -1$, $\left( \dfrac{p_1p_2}{q_2} \right) = -1$,  $\left( \dfrac{p_1}{q_1} \right) = \left( \dfrac{p_2}{q_2} \right) $.
\end{enumerate}
We prove the theorem only for the first set of Legendre symbol conditions because all other cases follow a similar line of argument. We first find the decomposition fields for places in $K$ lying above the rational primes $2, p_1, p_2, q_1$ and $q_2$ with respect to the extension $L(K)/K$. We know that a prime splits completely in the number fields $F_1$ and $F_2$, if and only if it splits completely in their compositum $F_1F_2$. In order to find the decomposition field of each place, we first look at the bi-quadratic extensions $F_i$ of $\Q$ such that $\Q \subseteq K \subseteq F_i \subseteq L(K)$. Since $L(K) =K_G = \Q(\sqrt{p_1}, \sqrt{p_2}, \sqrt{q_1q_2})$, we find that $F_1 = \Q(\sqrt{p_1p_2}, \sqrt{q_1q_2}), F_2 = \Q(\sqrt{p_1}, \sqrt{p_2q_1q_2}),$ and $F_3 = \Q(\sqrt{p_2}, \sqrt{p_1q_1q_2})$. Similarly, for $K'$, we find the bi-quadratic subfields $\Q \subseteq K' \subseteq F'_i \subseteq L(K')$ as $F'_1 = \Q( \sqrt{2}, \sqrt{p_1p_2q_1q_2}), F'_2 = \Q( \sqrt{2p_1}, \sqrt{p_2q_1q_2}), F'_3 = \Q( \sqrt{2p_2}, \sqrt{p_1q_1q_2}), F'_4 = \Q( \sqrt{2q_1q_2}, \sqrt{p_1p_2}), F'_5 = \Q( \sqrt{2p_1p_2}, \sqrt{q_1q_2}), F'_6 = \Q( \sqrt{p_1}, \sqrt{2p_2q_1q_2})$, and $F'_7 = \Q( \sqrt{p_2}, \sqrt{2p_1q_1q_2})$. 

\smallskip

Since $p_1p_2q_1q_2 \equiv 5 \pmod 8$, the rational prime $2$ must be inert in $K$, and hence $2\mathcal{O}_K = \mathfrak{l}$ is a prime ideal in $\mathcal{O}_K$. Also by congruence modulo $8$ conditions on the primes, we find that in all subfields of $F_1$ except $K$, $2$ must be totally decomposed. Thus $\mathfrak{l}$ must be totally decomposed in $F_1$. Likewise, $\mathfrak{l}$ must be totally decomposed in $F_2$ and $F_3$. Hence the decomposition field of $\mathfrak{l}$ in $L(K)/K$ is the compositum $F_{1}F_{2}F_{3}$, which is $L(K)$ itself. For the field $K'$, the rational prime $2$ is ramified and we have $2\mathcal{O}_{K'} = \mathfrak{l^{\prime}}^{2}$ where $\mathfrak{l^{\prime}}$ is a prime ideal in $\mathcal{O}_{K'}$. Again from the congruence conditions, we find that  $\mathfrak{l'}$ is totally decomposed only in the fields $F'_2, F'_3$ and $F'_4$. Hence their compositum $F'_2F'_3F'_4$ must be the decomposition field of $\mathfrak{l}'$ in $L(K')/K'$.\\

 \begin{figure}[hbt!] 
 \begin{tikzpicture}
 
    \node (Q1) at (0,0) {$\Q$};
    \node (Q2) at (2,2) {$\Q(\sqrt{p_1p_2})$};
    \node (Q3) at (0,2) {$K$};
    \node (Q4) at (-2,2) {$\Q(\sqrt{q_1q_2})$};  
    \node (Q5) at (0,4) {$F_1 = \Q(\sqrt{p_1p_2},\sqrt{q_1q_2})$};

    \draw (Q1)--(Q2);
    \draw (Q1)--(Q3); 
    \draw (Q1)--(Q4);
    \draw (Q2)--(Q5);
    \draw (Q3)--(Q5);
    \draw (Q4)--(Q5);


    \node (P1) at (6,0) {$\Q$};
    \node (P2) at (8,2) {$\Q(\sqrt{p_1})$};
    \node (P3) at (6,2) {$K$};
    \node (P4) at (4,2) {$\Q(\sqrt{p_2q_1q_2})$};  
    \node (P5) at (6,4) {$F_2 = \Q(\sqrt{p_1},\sqrt{p_2q_1q_2})$};

    \draw (P1)--(P2);
    \draw (P1)--(P3); 
    \draw (P1)--(P4);
    \draw (P2)--(P5);
    \draw (P3)--(P5);
    \draw (P4)--(P5);


    \node (R1) at (12,0) {$\Q$};
    \node (R2) at (14,2) {$\Q(\sqrt{p_2})$};
    \node (R3) at (12,2) {$K$};
    \node (R4) at (10,2) {$\Q(\sqrt{p_1q_1q_2})$};  
    \node (R5) at (12,4) {$F_3 =\Q(\sqrt{p_2},\sqrt{p_1q_1q_2})$};

    \draw (R1)--(R2);
    \draw (R1)--(R3); 
    \draw (R1)--(R4);
    \draw (R2)--(R5);
    \draw (R3)--(R5);
    \draw (R4)--(R5);
  
    \end{tikzpicture}
    \end{figure}

 \begin{figure}[hbt!] 
 \begin{tikzpicture}

    \node (Q1) at (0,0) {$\Q$};
    \node (Q2) at (2,2) {$\Q(\sqrt{p_1p_2})$};
    \node (Q3) at (0,2) {$K$};
    \node (Q4) at (-2,2) {$\Q(\sqrt{2q_1q_2})$};  
    \node (Q5) at (0,4) {$F'_4 = \Q(\sqrt{p_1p_2},\sqrt{2q_1q_2})$};
    
    \draw (Q1)--(Q2);
    \draw (Q1)--(Q3); 
    \draw (Q1)--(Q4);
    \draw (Q2)--(Q5);
    \draw (Q3)--(Q5);
    \draw (Q4)--(Q5);


    \node (P1) at (6,0) {$\Q$};
    \node (P2) at (8,2) {$\Q(\sqrt{2p_1})$};
    \node (P3) at (6,2) {$K$};
    \node (P4) at (4,2) {$\Q(\sqrt{p_2q_1q_2})$};  
    \node (P5) at (6,4) {$F'_2 = \Q(\sqrt{2p_1},\sqrt{p_2q_1q_2})$};
    
    \draw (P1)--(P2);
    \draw (P1)--(P3); 
    \draw (P1)--(P4);
    \draw (P2)--(P5);
    \draw (P3)--(P5);
    \draw (P4)--(P5);


    \node (R1) at (12,0) {$\Q$};
    \node (R2) at (14,2) {$\Q(\sqrt{2p_2})$};
    \node (R3) at (12,2) {$K$};
    \node (R4) at (10,2) {$\Q(\sqrt{p_1q_1q_2})$};  
    \node (R5) at (12,4) {$F'_3 =\Q(\sqrt{2p_2},\sqrt{p_1q_1q_2})$};
    
    \draw (R1)--(R2);
    \draw (R1)--(R3); 
    \draw (R1)--(R4);
    \draw (R2)--(R5);
    \draw (R3)--(R5);
    \draw (R4)--(R5);

    \end{tikzpicture}
    \end{figure}

Since $\left( \dfrac{q_1q_2}{p_2} \right) = 1$, we have either $\left( \dfrac{p_2}{q_1} \right) = \left( \dfrac{p_2}{q_2} \right) =1$ or $\left( \dfrac{p_2}{q_1} \right) = \left( \dfrac{p_2}{q_2} \right) = -1$. Let $p_i\mathcal{O}_K = \mathfrak{p}_i^2$ for $i=1,2$ and $q_j\mathcal{O}_K = \mathfrak{q}_j^2$ for $j = 1, 2$. Applying the Legendre symbol conditions and the fact that ramification index and residue degree are multiplicative, we conclude that when $\left( \dfrac{p_2}{q_1} \right) = \left( \dfrac{p_2}{q_2} \right) =1$, the decomposition field of $\mathfrak{q}_2$ is same as that of $\mathfrak{l}$, which equals $L(K)$. Hence the corresponding decomposition groups must be equal (in fact it would be the trivial group as the primes are totally decomposed in $L(K)$). Therefore, the respective Artin symbols of $\mathfrak{q}_2$ and $\mathfrak{l}$ must be equal. That is, $$\left( \dfrac{L(K)/K}{\mathfrak{q}_2} \right) = \left( \dfrac{L(K)/K}{\mathfrak{l}} \right).$$

Hence $\left[\mathfrak{l}\right] = \left[\mathfrak{q}_2\right]  \mbox{ and therefore } \langle \alpha \rangle \mathfrak{q}_2 = \mathfrak{l} = 2\mathcal{O}_K $, for some $\alpha \in K^{\times}$. Squaring both sides, we get $\langle \alpha^2 \rangle q_2\mathcal{O}_K = 4\mathcal{O}_K \mbox{ which implies } 4 = \eps^n \alpha^2 q_2$ for some $n \in \Z$, where $\varepsilon$ is the fundamental unit of $K$. If $n$ is even, then we get $2 = \eps^{\frac{n}{2}}\alpha\sqrt{q}_2$ which yields $\sqrt{q_2} \in K$ and thus $\Q( \sqrt{q_2}) = \Q(\sqrt{p_1p_2q_1q_2})$, which is a contradiction. Therefore, $n$ must be odd. In that case we get $2 = \sqrt{\eps}\beta\sqrt{q_2}$, where $\beta = \eps^{\frac{n-1}{2}}\alpha$. Now, if $\sqrt{\eps} \in K_1$, then $K_1= K_1(\sqrt{\eps}) = K_1(\sqrt{q_2})$, which is again not possible as $K_1 \neq K_1(\sqrt{q_2})$. Therefore, $\sqrt{\eps}\not\in K_1$ and $K_1(\sqrt{\eps}) = K_1(\sqrt{q_2})$. 

\smallskip

If we have $\left( \dfrac{p_2}{q_1} \right) = \left( \dfrac{p_2}{q_2} \right) =-1$, then we get that the decomposition field of $\mathfrak{q}_1$ is same as that of $\mathfrak{p}_2$. Correspondingly, we get that $\langle {\alpha_1}^2 \rangle p_2\mathcal{O}_K = q_1\mathcal{O}_K \Rightarrow p_2 = \eps^n {\alpha_1}^2 q_1$ for some $n \in \Z$ and $\alpha_1 \in K^{\times}$. If $n$ is even then $\sqrt{p_2} = \eps^{\frac{n}{2}}\alpha_1\sqrt{q_1}$, and thus $K(\sqrt{p_2}) = K(\sqrt{q_1})$, which is not possible. Therefore, $n$ must be odd. In that case, $\sqrt{p_2} = \sqrt{\eps}\beta_1\sqrt{q_1}$ where $\beta_1 \in K^{\times}$. If $\sqrt{\eps} \in K_1$, then we obtain that $K_1(\sqrt{p_2}) = K_1(\sqrt{q_1})$ which is not true. Therefore, $\sqrt{\eps} \not\in K_1$ and also, $K_1(\sqrt{\eps}) = K_1\left(\sqrt{\frac{p_2}{q_1}}\right) = K_1(\sqrt{p_1q_2})$.

\smallskip

For the field $K'$, let $p_i\mathcal{O}_{K'} = {{\mathfrak{p}'}_i}^2$ for $i=1,2$ and $q_j\mathcal{O}_{K'} = {{\mathfrak{q}}'_j}^2$ for $j = 1, 2$.  Irrespective of whether $\left( \dfrac{p_2}{q_1} \right) = \left( \dfrac{p_2}{q_2} \right) =\pm 1$, we find the decomposition field of ${\mathfrak{p}'}_1$ and $\mathfrak{l}'$ to be equal, arguing as before. Proceeding as above, we find that $\eps' \not\in K_1$ and $K_1(\sqrt{\eps'}) = K_1(\sqrt{p_1})$. Also, we note that $K_1(\sqrt{\eps}) \neq K_1(\sqrt{\eps'})$. If $\sqrt{\eps \eps'} \in K_1$, then
$\sqrt{\eps \eps'} \in K_1 \subseteq K_1(\sqrt{\eps})$. This implies $\sqrt{\eps'} \in K_1(\sqrt{\eps})$ and consequently, we have $K_1(\sqrt{\eps'}) \subseteq K_1(\sqrt{\eps})$. Now, $$\sqrt{\eps \eps'} \in K_1 \subseteq K_1(\sqrt{\eps'}) \Rightarrow \sqrt{\eps} \in K_1(\sqrt{\eps'}) \Rightarrow K_1(\sqrt{\eps}) \subseteq K_1(\sqrt{\eps'}) \Rightarrow K_1(\sqrt{\eps}) = K_1(\sqrt{\eps'}),$$ which is a contradiction.

\smallskip

Therefore, we conclude that $\sqrt{\eps}, \sqrt{\eps'}, \sqrt{\eps\eps'} \not\in K_1$, which means that any system of fundamental units of $K_1$ does not contain the square-roots of the fundamental units of $K$ or $K'$, nor does it contain the product of their square roots. We note that the subfields of $K_1$ are $K, K'$ and $\Q(\sqrt{2})$. The fundamental unit of $\Q(\sqrt{2})$ is $1 + \sqrt{2}$ and it has norm $N_{\Q(\sqrt{2})/\Q}(1 + \sqrt{2}) =-1$. From Theorem \ref{kubota}, we deduce that a system of fundamental units of $K_1$ must be $\{ \eps, \eps', 1+ \sqrt{2} \}$. Hence the Hasse unit index $Q(K_1) = 1$. Since the class number of $\Q(\sqrt{2}) \mbox{ equals } 1$, we have $\# A(\Q(\sqrt{2})) = 1 $, and by Theorem \ref{kubota}, we obtain
 $$\#A(K_1) = \dfrac{1}{4}\cdot Q(K_1)\cdot \#A(K)\cdot \#A(K')\cdot \#A(\Q(\sqrt{2})) = \dfrac{1}{4}\cdot 1 \cdot 4 \cdot 8 \cdot 1 = 8.$$ 
Since ${\rm{rank}} \ A(K_1) = 2$, and $\#A(K_1) = 8$, we have $A(K_1) \simeq \Z/2\Z \oplus \Z/4\Z$. This completes the proof of case 1 of Theorem \ref{A(K1) for K = p1p2q1q2)}. $\hfill\Box$

\smallskip

{\bf Acknowledgements.} The authors take immense pleasure to thank Indian Institute of Technology Guwahati for providing excellent facilities to carry out this research. The first author sincerely acknowledges the National Board of Higher Mathematics (NBHM) for the Post-Doctoral Fellowship (Order No. 0204/16(12)/2020/R \& D-II/10925). The research of the third author is partially funded by the MATRICS, SERB research grant MTR/2020/000467.

\end{document}